\newtheorem {theorem} {Theorem}%[section]
\newtheorem {proposition} [theorem]{Proposition}
\newtheorem {corollary} [theorem]{Corollary}
\newtheorem {lemma}  [theorem]{Lemma}
\newtheorem {remark} [theorem]{Remark}
\newtheorem{mtheorem}{Theorem}
\tikzset{node distance=3cm, auto}
\begin{document}

\title[Simultaneous bifurcation of limit cycles for PWHS]
{Simultaneous bifurcation of limit cycles for Piecewise Holomorphic systems}

\author[Armengol Gasull, Gabriel Rondón and Paulo R. da Silva]{Armengol Gasull$^{1}$, Gabriel Rondón$^{1}$ and Paulo R. da Silva$^{2}$}
\address{$^{1}$Departament de Matemàtiques, Edifici Cc, Universitat Autònoma de Barcelona, 08193 Bellaterra, Barcelona, Spain.}
\address{$^{2}$S\~{a}o Paulo State University (Unesp), Institute of Biosciences, Humanities and
	Exact Sciences. Rua C. Colombo, 2265, CEP 15054--000. S. J. Rio Preto, S\~ao Paulo,
	Brazil.}
 
\email{armengol.gasull@uab.cat}
\email{garv202020@gmail.com}
\email{paulo.r.silva@unesp.br}

%\thanks{}

\subjclass[2020]{32A10, 34A36, 34C07, 37G15}

\keywords {Piecewise polynomial complex systems, holomorphic systems, limit cycles, averaging method, simultaneous bifurcation}
\date{}
\maketitle

\begin{abstract}
Let $\dot{z}=f(z)$ be a holomorphic differential equation with center at $p$.  In this paper we are concerned about studying the piecewise perturbation systems $\dot{z}=f(z)+\epsilon R^\pm(z,\overline{z}),$ where $R^\pm(z,\overline{z})$ are complex polynomials defined for $\pm\operatorname{Im}(z)> 0.$  We provide an integral expression, similar to an Abelian integral, for the period annulus of  $p.$ The zeros of this integral control the bifurcating limit cycles from the periodic  orbits of this annular region. This expression is given in terms of the conformal conjugation between $\dot{z}=f(z)$ and its linearization $\dot{z}=f'(p)z$ at $p$. We use this result to control the simultaneous bifurcation of limit cycles of the two annular periods of $\dot{z}={\rm i} (z^2-1)/2$, after both complex and holomorphic piecewise polynomial perturbations.  In particular, as far as we know,  we provide the first proof of the existence of non nested limit cycles for piecewise holomorphic systems.
\end{abstract}

%\tableofcontents

\section{Introduction}

There are several important aspects to understand the dynamics of planar differential systems such as knowledge of the existence and number of limit cycles. In fact, the famous Hilbert's 16th problem is one of the main open problems in the qualitative theory of planar polynomial vector fields. Finding good upper or lower bounds for the maximum number of limit cycles of particular families of such systems in terms of their degrees, or other characteristics, constitute challenging  problems. Of course, the existence of upper bounds, explicit or not, are the more difficult questions. In this work we will concentrate on lower bounds.

In recent years, great interest has arisen in the study of limit cycles of piecewise holomorphic systems, which is a subfamily of piecewise smooth systems. This is because holomorphic functions have many applications in various areas of applied science, such as the study of fluid dynamics \cite{BatGK,Mars,Conw}. Furthermore, the study and properties of holomorphic systems $\dot{z}=f(z)$ make them interesting and beautiful but the absence of limit cycles makes them dynamically poor. Interestingly, in \cite{GASRONSIL1} and \cite{Rondon2022440} the authors showed that there are piecewise holomorphic systems that have limit cycles. More precisely, in \cite{Rondon2022440} the authors have used the intrinsic properties of holomorphic functions, such as their integrability, to construct limit cycles, whereas in \cite{GASRONSIL1} Gasull et al. approach this problem with different points of view: study of the number of zeros of the first and second order averaged functions, and with the control of the limit cycles appearing from a monodromic equilibrium point via a degenerated Andronov--Hopf type bifurcation.

Consider the piecewise polynomial complex systems (PWCS), 
\begin{equation}
\begin{aligned}\label{eq_pert}
\dot{z}=f(z)+\left\{\begin{array}{l}
\epsilon R^+_m(z,\overline{z}), \text{ when } \operatorname{Im}(z)> 0,\\[5pt]
\epsilon R^-_m(z,\overline{z}),\text{ when } \operatorname{Im}(z)<0,
\end{array} \right.
\end{aligned}
\end{equation}
where $\dot{z}=f(z)$ has a center at $p,$ $0<\epsilon\ll1,$ $z=x+iy\in\mathbb{C}$ and $R_m^{\pm}(z,\overline{z})$ are complex polynomial functions with  degree $m.$  Notice that
the straight line  $\Sigma=\{z\in\mathbb{C}:\operatorname{Im}(z)=0\}$ divides the plane in two halfplanes  $\Sigma^\pm=\{z\in\mathbb{C}:\pm\operatorname{Im}(z)>0\}.$ As usual, the orbits on $\Sigma$ are defined following the Filippov convention, see \cite{Filippov88}  for more details. 

When in system~\eqref{eq_pert}, $f(z)= {\rm i}z,$  in the  $(r,\theta)-$polar coordinates $z=re^{i\theta}$, $r>0$ and    $\theta\in \mathbb{S}^1,$  it is converted into
\begin{equation}\label{polar_sys_int}
\dfrac{dr}{d\theta}=\begin{aligned}
\left\{\begin{array}{l}
F^+(\theta,r,\epsilon)=\epsilon F_1^+(\theta,r)+\mathcal{O}(\epsilon^{2}), \text{ if } 0\leq\theta\leq\pi,\\[5pt]
F^-(\theta,r,\epsilon)=\epsilon F_1^-(\theta,r)+\mathcal{O}(\epsilon^{2}), \text{ if } \pi\leq\theta\leq2\pi,
\end{array} \right.
\end{aligned}
\end{equation}
where $\epsilon>0$ is a sufficiently small parameter and $\mathcal{O}$ represents terms of order at least two in~$\varepsilon$ for the functions~$F^\pm.$  By using the theory of averaging in this context (see~\cite{MR3729598}),  it is well-known that each simple zero $r=r_0$ of  
\begin{equation}\label{abeliana}
    M_1(r)=M_1^+(r)-M_1^-(r)\quad\mbox{where}\quad M_1^\pm(r)=\displaystyle\int_0^{\pm\pi} F_1^\pm(\theta,r) d\theta,
\end{equation}
provides, for $\epsilon$ small enough, a hyperbolic limit cycle of the piecewise smooth system \eqref{polar_sys_int} that tends to $r=r_0$ when $\epsilon$ tends to 0. The function $M_1$ is called the {\it first order averaged function} and sometimes it is also called the {\it first order Melnikov function}.

Our first aim is to use expression~\eqref{abeliana} to obtain a general closed expression for the Melnikov function of system~\eqref{eq_pert} for a general $f$ such that $\dot z=f(z)$ has a center at $p.$ From \cite{BLT,GGJ}, we know that there exists a conformal map $w=\phi(z)$ such that this differential equation can be written as $\dot{w}= -{\rm i}w$ (or $\dot{w}= {\rm i}w$). The map $\phi$ is called the  {\it linearizing change} of $\dot{z}=f(z)$ at~$p$. Our approach works on the largest open set where this conformal map is well defined. In what follows, we state our first main result.

\begin{mtheorem}\label{tma}
    Consider the piecewise complex system \eqref{eq_pert}. Suppose that $\phi$ is the linearizing change of  $\dot{z}=f(z)$ at $p$ such that $\phi(\Sigma)\subset\Sigma$. Then, its first order Melnikov function is
$M_1(r)=M_1^+(r)-M_1^-(r),$ where
\begin{equation}
    M_1^\pm(r)=  -\operatorname{Im}\left(\displaystyle\int_{0}^{\pm\pi}\overline{\phi'(\phi^{-1}(re^{{\rm i}\theta}))R_m^\pm\left(\phi^{-1}(re^{{\rm i}\theta}),\overline{\phi^{-1}(re^{{\rm i}\theta}})\right)}\,{\rm i}e^{{\rm i}\theta}d\theta\right).
\end{equation}
In particular,  each simple zero $r=r_0$ of $M_1$ provides, for $\epsilon$ sufficiently small, a hyperbolic limit cycle of \eqref{eq_pert} that tends to $r=r_0$ when $\epsilon$ tends to 0.
\end{mtheorem}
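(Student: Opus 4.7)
The strategy is to use the linearizing conformal change $w=\phi(z)$ to convert the problem into a piecewise perturbation of the canonical linear center $\dot w=-{\rm i}w$, and then invoke formula~\eqref{abeliana} directly on the transformed system. All the work is in translating the perturbation across $\phi$ and rewriting the resulting real integral in the compact ``$\operatorname{Im}$ of a complex line integral'' form stated in the theorem.

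Concretely, applying $w=\phi(z)$ to \eqref{eq_pert} and using the defining relation $\phi'(z)f(z)=-{\rm i}\phi(z)$ of the linearizing change, one obtains
\[
\dot w \;=\; -{\rm i}w+\varepsilon\,G^{\pm}(w,\bar w),\qquad \pm\operatorname{Im}(w)>0,
\]
with $G^{\pm}(w,\bar w):=\phi'(\phi^{-1}(w))\,R_m^{\pm}\!\bigl(\phi^{-1}(w),\overline{\phi^{-1}(w)}\bigr)$. The hypothesis $\phi(\Sigma)\subset\Sigma$ guarantees that the discontinuity set in $w$-coordinates is still $\operatorname{Im}(w)=0$, so the transformed system has exactly the piecewise structure for which \eqref{abeliana} is valid. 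Setting $w=re^{{\rm i}\theta}$ and multiplying the equation by $e^{-{\rm i}\theta}$, real and imaginary parts give $\dot r=\varepsilon\operatorname{Re}(e^{-{\rm i}\theta}G^{\pm})$ and $r\dot\theta=-r+\varepsilon\operatorname{Im}(e^{-{\rm i}\theta}G^{\pm})$, so that to first order in $\varepsilon$,
\[
\frac{dr}{d\theta}\;=\; -\varepsilon\,\operatorname{Re}\!\bigl(e^{-{\rm i}\theta}G^{\pm}(re^{{\rm i}\theta},re^{-{\rm i}\theta})\bigr)+\mathcal O(\varepsilon^{2}),
\]
which matches \eqref{polar_sys_int} with $F_1^{\pm}(\theta,r)=-\operatorname{Re}(e^{-{\rm i}\theta}G^{\pm})$.

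Plugging this $F_1^{\pm}$ into \eqref{abeliana} and using the elementary identity $\operatorname{Re}(A)=\operatorname{Re}(\bar A)=\operatorname{Im}({\rm i}\bar A)$ to rewrite
\[
-\operatorname{Re}\!\bigl(e^{-{\rm i}\theta}G^{\pm}\bigr)\;=\;-\operatorname{Im}\!\bigl(\overline{G^{\pm}}\,{\rm i}\,e^{{\rm i}\theta}\bigr),
\]
and then pulling $\operatorname{Im}$ outside the integral, one recovers the stated formula for $M_1^{\pm}$ after unfolding the definition of $G^{\pm}$. The final assertion (simple zeros of $M_1$ produce hyperbolic limit cycles) is then inherited from the first-order averaging theorem for Filippov systems in \cite{MR3729598}, applied in $w$-coordinates, and transferred back to $z$ via the local diffeomorphism $\phi$, which preserves both limit cycles and their hyperbolicity.

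The step requiring the most vigilance is sign and orientation bookkeeping: the normalized flow $\dot w=-{\rm i}w$ rotates clockwise, opposite to the counterclockwise model $\dot z={\rm i}z$ from which \eqref{abeliana} was originally derived. This is absorbed by the minus sign that surfaces in $F_1^{\pm}$ and is harmless overall because \eqref{abeliana} is a formula tied to the polar form \eqref{polar_sys_int}, which our $dr/d\theta$-equation satisfies intrinsically. To keep the integration limits $0\to\pm\pi$ in correct correspondence with $\Sigma^{\pm}$, one also observes that $\phi(\Sigma)\subset\Sigma$, together with continuity and orientation of $\phi$, forces $\phi(\Sigma^{\pm})\subset\Sigma^{\pm}$, so no half-plane swap occurs and the $\pm$ labels match throughout.
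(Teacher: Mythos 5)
Your proposal is correct and follows essentially the same route as the paper: linearize with $w=\phi(z)$ to get $\dot w=-{\rm i}w+\epsilon L^\pm$, pass to polar coordinates, identify $F_1^\pm(\theta,r)=-\operatorname{Re}\bigl(e^{-{\rm i}\theta}L^\pm\bigr)$ at first order in $\epsilon$, rewrite the integrand as $-\operatorname{Im}\bigl(\overline{L^\pm}\,{\rm i}e^{{\rm i}\theta}\bigr)$, and conclude with the first-order piecewise averaging theorem. Your derivation of $dr/d\theta$ by multiplying by $e^{-{\rm i}\theta}$ and separating real and imaginary parts is only a cosmetic variant of the paper's quotient computation in \eqref{polar_eq}, and your extra remarks on orientation and on $\phi(\Sigma^\pm)\subset\Sigma^\pm$ are consistent with (indeed slightly more explicit than) the paper's treatment.
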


The above results is an extension to the discontinuous case to the one obtained in~\cite{GARIJO2008813} in the smooth situation.

We will employ Theorem \ref{tma} to study PWCS \eqref{eq_pert} for $f(z)={\rm i}(z^2-1)/2$ and 
\begin{equation}\label{tom}
R_m^\pm(z,\overline{z})=\sum_{l=0}^m\sum_{k=0}^{l}\overline{a}^\pm_{k,l}z^{l-k}\overline{z}^k,\,\, a_{k,l}\in\mathbb{C}\,\,\text{and}\,\, m=0,1,2,3.
\end{equation}
In fact, this unperturbed system has been also the one considered in~\cite{GARIJO2008813} in the smooth perturbations context.

We emphasize that $\dot z=f(z)={\rm i}(z^2-1)/2$ has 2 centers at -1 and 1,  separated by the invariant straight line $\operatorname{Re}(z)=0.$ Each of the punctured halfplanes $\{z\in\mathbb{C}:\pm\operatorname{Re}(z)>0\} \setminus\{\pm 1\}$ is filled by periodic orbits of the system.  To carry out this study, we explicitly use the linearization change $w=\phi(z)=(1+z)/(1-z)$ of $\dot{z}=f(z)$ at $-1$, which we employ to find the bifurcation function at $z=-1$. Thus, by changing variables and time we will obtain the bifurcation function at $z=1.$ It is worth noting that this linearizing  is specially simple and has also a simple inverse but, unfortunately, for most holomorphic vector fields $f$, the calculations can be much more complicated.

This type of problem has been addressed in several papers. Specifically, in \cite{GARIJO2008813} Garijo et al. study the smooth case, that is, $R_m=R^+_m=R^-_m$, providing an integral expression for the differential equation $\dot{z}=f(z)+\epsilon R_m( z,\overline{z})$ and use this formula to control the simultaneous bifurcation of limit cycles of the two annular periods of $\dot{z}= {\rm i}z + z^2$, 
after a polynomial perturbation. Also, in \cite{DACRUZ2018248} the authors investigate the number of bifurcating periodic orbits of a cubic polynomial vector field having two period rings using piecewise perturbations. They study, up to first-order averaging analysis, the bifurcation of periodic orbits of the two annular periods, the first separately and the second simultaneously. There are other works that consider the problem of simultaneous bifurcation such as \cite{CHICONE1991268,article_Gasull,Zhao2002} although in the context of piecewise systems the type of bifurcation that we consider in this paper is a complete novelty.

Before stating our second main result, we introduce some notations. We denote by $M_1$ and~$N_1$ the first order averaged functions  at $-1$ and $1$, respectively. In addition, we say that system~\eqref{eq_pert} presents the {\it configuration of limit cycles $[i,j]$}  if  $M_1$ and $N_1$ have simultaneously~$i$ and $j$ simple zeros in the interval $(0,1),$ respectively. Our definition is motivated from the theory of  averaging of first order, because as we will see in this case, for $\varepsilon$ small enough, the differential system  has $i$ limit cycles surrounding~$-1$ and $j$~limit cycles surrounding~$1.$ Obviously, by the symmetry of the problem if the configuration $[i,j]$ holds the configuration $[j,i]$ also does. For short we will say that the {\it configuration $[[i,j]]$ is realizable.}

For  $m\le 3,$ we will prove that  both functions, multiplied by $r,$ belong to the vectorial space $\mathcal{F}$ generated by the functions 
 $$\mathcal{F}=[r,r^2,r^3,r^4,(r^2-1)^2\operatorname{arctanh}(r),(r^4-1)\operatorname{arctanh}(r),r^2\operatorname{arctanh}(r)],$$
where recall that  $$\operatorname{arctanh}(r)=\tanh^{-1}(r)=\frac12\ln\left(\frac{1+r}{1-r}\right).$$
As we will prove, the above ordered set of functions forms an  {\it extended
complete Chebyshev system (ECT-system) on $(0,1),$} see Section~\ref{misc} for more details. This property will allow to control the exact number of zeros of each of the functions, separately.

For bigger $m$ the number of monomials $r^k,$ as well as the number of functions of the form $S_l(r)\operatorname{arctanh}(r)$ for some fixed increasing degree polynomials $S_l$ in $M_1$ and $N_1$ will grow, but our approach also would apply. In short, for a fixed $m$ the control of the maximum number of zeros of $M_1$ and $N_1$ seems that could be completely understood, but would need much more computational effort. For this reason we have restricted most of our attention to the case $m\le3.$ On the other hand, the knowledge of the maximum number that both functions can have simultaneously is a difficult and challenging problem. All results that we have obtained in this direction are resumed in next theorems.

\begin{mtheorem}\label{tmb} Consider PWCS \eqref{eq_pert} with  $f(z)={\rm i}(z^2-1)/2$. If $m=0,$ the unique realizable configurations are  $[[i,j]],$ with $0\leq i+j\leq 1$. When $1\le m\le 3,$  if $[[i,j]]$ is a realizable configuration then $i,j\le m+3.$  Moreover the following configurations, of course satisfying   $i,j\le m+3,$ are realizable:
 \begin{itemize}       
        \item[(a)] $[[i,j]]$ with $0\leq i+j\leq 4$ when $m=1.$ 
         \item[(b)] $[[i,j]]$ with $0\leq i+j\leq 6,$  when $m=2.$
          \item[(c)] $[[i,j]]$ with $0\leq i+j\leq 8$ when $m=3.$
            \end{itemize}
\end{mtheorem}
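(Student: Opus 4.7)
\smallskip\noindent\emph{Proof proposal.}
The plan is to apply Theorem~\ref{tma} with the explicit linearizing change $\phi(z)=(1+z)/(1-z)$ at the center $p=-1$, which does satisfy $\phi(\Sigma)\subset\Sigma$ since $\phi$ is real on the real axis. The Melnikov function $N_1(r)$ at $p=1$ is then read off from $M_1(r)$ by the symmetry $z\mapsto -z$ of the unperturbed system; this change also interchanges the halfplanes $\Sigma^{+}\leftrightarrow\Sigma^{-}$, so $R^{+}_m$ and $R^{-}_m$ appear with swapped roles inside the formula for $N_1$. Substituting $\phi^{-1}(w)=(w-1)/(w+1)$ and $\phi'(\phi^{-1}(w))=(1+w)^2/2$ into Theorem~\ref{tma}, and expanding $R^{\pm}_m$ in its monomials $\bar a^{\pm}_{k,l}z^{l-k}\bar z^k$, each integral reduces to a trigonometric integral on $[0,\pm\pi]$ of a rational function of $re^{\mathrm{i}\theta}$, whose only poles lie on $|w|=1$; such integrals are computable in closed form by residues or series expansion.

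Collecting contributions one verifies that, for $m\le 3$, both $rM_1(r)$ and $rN_1(r)$ lie in the ordered linear space $\mathcal{F}$ displayed in the excerpt, and a perturbation of degree $m$ only excites the first $m+4$ elements of $\mathcal{F}$. Since $\mathcal{F}$ is an ECT-system on $(0,1)$ (proved in Section~\ref{misc}), every nontrivial element of the sub-system spanned by the first $m+4$ basis functions has at most $m+3$ zeros in $(0,1)$, counted with multiplicity, which gives the upper bound $i,j\le m+3$ in the cases $m=1,2,3$. For $m=0$ only the first basis function is reached, and the direct computation of the linear map $a^{\pm}_{0,0}\mapsto (c_{M_1},c_{N_1})$, with $c_{M_1}$ and $c_{N_1}$ the scalar coefficients, shows that its image has real dimension one, forcing the simultaneous bound $i+j\le 1$.

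For the realizability claims in (a)--(c), I would consider the explicit $\mathbb{R}$-linear map
\[
T_m\colon\bigl\{\operatorname{Re} a^{\pm}_{k,l},\,\operatorname{Im} a^{\pm}_{k,l}\bigr\}\longrightarrow \mathbb{R}^{m+4}\times\mathbb{R}^{m+4},\qquad T_m(\cdot)=\bigl(\text{coord.\ of }rM_1,\,\text{coord.\ of }rN_1\bigr),
\]
built from the integrals of the previous step. Given a target $[[i,j]]$ with $i+j\le 2(m+1)$, I would (i) invoke the ECT property of $\mathcal{F}$ to pick an element of $\operatorname{span}\mathcal{F}$ with $i$ prescribed simple zeros in $(0,1)$ and invert the first block of $T_m$ on that target to fix a parameter slice realizing such an $M_1$; (ii) use the remaining degrees of freedom inside this slice, whose dimension is bounded below by the kernel of the first block of $T_m$, to produce, again via ECT, an $N_1$ with $j$ simple zeros. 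A final small perturbation makes every such zero hyperbolic, and then Theorem~\ref{tma} lifts them to hyperbolic limit cycles of system~\eqref{eq_pert}. The main obstacle is precisely this coupling step: the same $a^{\pm}_{k,l}$ feed into both coordinates of $T_m$, so one must verify, via a finite rank-and-intersection calculation for each $m\in\{1,2,3\}$, that the joint image of $T_m$ meets every Chebyshev-admissible bi-configuration satisfying $i+j\le 2(m+1)$. The fact that the attainable sum is $2(m+1)$ rather than the naive $2(m+3)$ reflects the effective dimension of this joint image and is the sharp numerical content behind the bound.
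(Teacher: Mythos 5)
Your first half coincides with the paper: the expressions of $M_1$ and $N_1$ are obtained exactly as you describe (Theorem~\ref{tma} with $\phi(z)=(1+z)/(1-z)$, and $N_1$ from $M_1$ via $w(t)=-z(-t)$, which swaps $R^+_m$ and $R^-_m$ and introduces the signs $(-1)^l$), and the individual bound $i,j\le m+3$ follows, as you say, from the ECT property of $\mathcal{F}$ restricted to the $m+4$ functions actually excited. The problems are in the other two parts. For $m=0$ your mechanism is wrong: it is not true that ``only the first basis function is reached'' nor that the image of the parameter map has real dimension one. One gets $M_1(r)=a+br-ar^2$ and $N_1(r)=-a+br+ar^2$ with $(a,b)$ ranging over all of $\mathbb{R}^2$; the bound $i+j\le 1$ comes from the coupled structure, e.g.\ from $N_1(r)=r^2M_1(1/r)$ together with the fact that the product of the roots of $-ar^2+br+a$ is $-1$, so $M_1$ has at most one zero in $(0,+\infty)$, hence at most one of $M_1,N_1$ can vanish in $(0,1)$.

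More seriously, the realizability of (a)--(c), which is the substantive content of the theorem, is left as an unverified ``rank-and-intersection calculation'', and the two-stage scheme you outline would in fact fail. By Proposition~\ref{prop_aux_complex}, the coefficients of $N_1$ are functions of the \emph{same} tuple $(a,b,c,d,\alpha,\beta,\gamma,\kappa,\rho)$ that determines $M_1$: for $m=1$ ($\kappa=\rho=\beta=\gamma=0$) fixing $M_1$ completely determines $N_1$, and for $m=2,3$ only $\kappa$, respectively $(\kappa,\rho)$, survive once $M_1$ is fixed. The large kernel of your ``first block of $T_m$'' (directions in the $a^\pm_{k,l}$ not changing $M_1$) also leaves $N_1$ essentially unchanged, so step (ii) has no effective freedom to prescribe $j$ zeros of $N_1$, e.g.\ $[[2,2]]$ for $m=1$ is unreachable by your scheme. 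The paper resolves the coupling differently: it normalizes the top transcendental coefficient ($\alpha=1$, $\beta=1$, $\gamma=1$ for $m=1,2,3$), chooses $2(m+1)$ points $r_j\in(-1,1)$, and imposes the $i+j$ conditions $f(r_j)=0$ or $g(r_j)=0$ \emph{jointly} as one linear system in the remaining $2(m+1)$ free coefficients, using negative $r_j$ to absorb unwanted zeros; afterwards it checks simplicity (via the ECT property) and, in a worked case such as $(m_1,n_1)=(4,0)$, excludes extra zeros of $g$ by a Sturm-sequence argument. Finally, your closing claim that $i+j\le 2(m+1)$ is ``sharp'' is neither proved by your argument nor asserted by the theorem, which only states realizability up to that sum.
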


As a very particular subcase in the proof of Theorem~\ref{tmb} there appears the situation when both $M_1$ and $N_1$ are polynomials. In this situation the question of the simultaneous number of zeros in $(0,1)$ of both functions can be approached with much  more detail. We believe that this is a problem that is interesting by itself. All our results in this subcase are resumed in Proposition~\ref{pro:pol}.

At this point another natural question arises: What happens if the complex perturbation function $R^\pm_m$ in  PWCS~\eqref{eq_pert} is holomorphic?  For short we will call these systems  PWHS. For them both $R^\pm_m$ depend only on $z$. As we will see, the fact that the perturbation is holomorphic greatly simplifies the calculations and it is not surprising that the number of cycles that arise is less than in the above more general situation. 

As we will prove, in this case and for   $m\ge 3,$ the functions  $M_1(r)$ and $N_1(r),$ multiplied by $(r^2-1)^{m-3},$ belong to the vectorial space $\mathcal{G}$ generated by the functions 
$$\mathcal{G}=[1,r,r^2,\ldots, r^{2(m-2)}, r(r^2-1)^{m-3}\operatorname{arctanh}(r)],$$
that is also an ECT-system on $(0,1).$  We only have tackled the problem of simultaneous bifurcations when $m\le 3.$ Our main result for PWHS is:

\begin{mtheorem}\label{tmc}  Consider piecewise holomorphic systems of the form~\eqref{eq_pert}, with  $f(z)={\rm i}(z^2-1)/2,$ where $R^\pm_m$ depend only on $z$. The following holds:
	
(a) When $m=0$ the unique realizable configurations are  $[[i,j]],$ with $0\leq i+j\leq 1$ and
 when $m\in\{1,2\}$ the unique realizable configurations are  $[[i,j]]$ with $i+j\le 2.$

(b)   When $m=3,$  if $[[i,j]]$ is a realizable configuration then $i,j\le 3.$   
Moreover the following configurations, of course satisfying   $i,j\le 3,$ are realizable: $[[i,j]]$ with $0\leq i+j\leq 4.$

 (c)  When $m> 3,$  if $[[i,j]]$ is a realizable configuration then $i,j\le 2m-3.$ 
\end{mtheorem}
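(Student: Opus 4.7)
The plan is to apply Theorem~\ref{tma} to $f(z)={\rm i}(z^2-1)/2$ with the linearizing change $\phi(z)=(1+z)/(1-z)$ at the center $-1$, for which $\phi^{-1}(w)=(w-1)/(w+1)$, $\phi'(\phi^{-1}(w))=(w+1)^2/2$, and $\phi(\Sigma)\subset\Sigma$. In the holomorphic case $R^{\pm}_m(z)=\sum_{k=0}^m c^{\pm}_k z^k$, a direct substitution gives
$$\phi'(\phi^{-1}(w))\,R^{\pm}_m(\phi^{-1}(w))=\tfrac12\sum_{k=0}^m c^{\pm}_k (w-1)^k(w+1)^{2-k},$$
which is a polynomial in $w$ for $m\le 2$ and a rational function with pole of order $m-2$ at $w=-1$ for $m\ge 3$. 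Setting $w=re^{{\rm i}\theta}$ and integrating as in Theorem~\ref{tma} produces $M_1^{\pm}(r)$, and the analogous computation at the center $1$ (obtained by the symmetry $z\mapsto -z$ together with a time reparametrization) yields $N_1^{\pm}(r)$. The integrals can be evaluated explicitly, splitting into trigonometric polynomial parts (contributing the monomials $r^k$) and a logarithmic part coming from the pole at $w=-1$ (contributing the $\operatorname{arctanh}$ terms in $\mathcal{G}$).

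For the upper bounds, I would first check that the basis $\mathcal{G}=[1,r,\ldots,r^{2(m-2)},r(r^2-1)^{m-3}\operatorname{arctanh}(r)]$ announced in the excerpt is indeed an ECT-system on $(0,1)$ (as the paper says it will prove). Since $\#\mathcal{G}=2m-2$, any nontrivial element has at most $2m-3$ zeros in $(0,1)$. Because the multiplier $(r^2-1)^{m-3}$ has no zeros on $(0,1)$, the functions $M_1$ and $N_1$ themselves have at most $2m-3$ zeros, giving the bound $i,j\le 2m-3$ of part~(c) and the bound $i,j\le 3$ of part~(b). For $m\in\{0,1,2\}$ the Melnikov functions lie in smaller spaces computed directly from the polynomial integrand above (spanned by at most $3$ independent functions on $(0,1)$), and the tighter joint bound $i+j\le 2$ (respectively $i+j\le 1$ for $m=0$) follows by inspecting the explicit linear dependence of the coefficients of $M_1$ and $N_1$ on the common parameters $c^{\pm}_k$: one sees that only a very limited subspace of the candidate two-function space $(M_1,N_1)$ is actually realizable, and a Chebyshev-style argument on that subspace caps the joint zero count.

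For the realizability claims in (a) and (b), I would construct explicit choices of $c^{\pm}_k$. The key additional freedom compared to the smooth perturbation case is that $R^+$ and $R^-$ have independent coefficients, so the coordinates of $M_1$ in $\mathcal{G}$ can be adjusted essentially independently of those of $N_1$ within the image of the coefficient-to-basis map. The strategy is to show that this map has an image of sufficiently large dimension, and then to use the standard ECT argument: starting from a polynomial with prescribed simple roots $0<r_1<\cdots<r_i<1$ for $M_1$ and $0<s_1<\cdots<s_j<1$ for $N_1$, one perturbs in directions that do not create or destroy zeros in $(0,1)$ until all realizability constraints from the coefficient structure are met. This bootstraps all configurations with $i+j\le 4$ (respectively $\le 2$) for $m=3$ (respectively $m\in\{1,2\}$).

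The main obstacle is exactly this linear-algebraic verification for part~(b): one must show that the map from the $2(m+1)$ complex parameters $c^{\pm}_k$ into the coordinates of the pair $(M_1,N_1)$ in $\mathcal{G}\times\mathcal{G}$ has image of dimension at least~$5$ (so that four prescribed simple zeros distributed among $M_1$ and $N_1$ can always be achieved), while at the same time ruling out configurations with $i+j\ge 5$. This rank computation is the technical heart of the argument. For part~(c) only the upper bound is claimed, so no realizability construction is required beyond what the ECT-system argument provides.
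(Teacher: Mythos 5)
Your route for parts (b) and (c) is essentially the paper's: linearize with $\phi(z)=(1+z)/(1-z)$, apply Theorem~\ref{tma}, observe that the integrand $\tfrac12\sum_k c_k^\pm(w-1)^k(w+1)^{2-k}$ has a pole of order $m-2$ at $w=-1$ producing the $\operatorname{arctanh}$ term, verify that $\mathcal{G}$ (with $2m-2$ elements) is an ECT-system so that each of $M_1,N_1$ has at most $2m-3$ zeros in $(0,1)$ (at most $3$ when $m=3$), and realize configurations for $m=3$ by solving the linear system obtained from prescribing zeros in the $5$-parameter family $M_1=a+br+cr^2+\alpha r\operatorname{arctanh}(r)$, $N_1=c+(b-\kappa)r+ar^2+\alpha r\operatorname{arctanh}(r)$. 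The Wronskian verification you defer is exactly what the paper supplies for (c), so there is nothing structurally different there.

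The genuine gap is in part (a) for $m\in\{1,2\}$. There the explicit computation gives $M_1(r)=a+br+cr^2$ and $N_1(r)=c+br+ar^2=r^2M_1(1/r)$ with $a,b,c$ arbitrary: the two functions are \emph{reciprocal polynomials}, so the zeros of $N_1$ are precisely the reciprocals of the zeros of $M_1$, and in particular $N_1$ is completely determined by $M_1$. This single observation is what produces the sharp joint bound $i+j\le 2$ (the zeros of $M_1$ in $(0,1)$ and of $N_1$ in $(0,1)$ all come from the at most two roots of one quadratic, split between $(0,1)$ and $(1,\infty)$) and it also gives realizability by placing those two roots appropriately. Your proposal replaces this by an unspecified ``inspection of the linear dependence'' plus a ``Chebyshev-style argument on that subspace''; but a Chebyshev bound on each function separately only yields $i,j\le 2$, i.e. $i+j\le 4$, so the claimed bound $i+j\le 2$ is not reached without the reciprocity relation, and your assertion that the coordinates of $M_1$ can be adjusted ``essentially independently'' of those of $N_1$ is actually false in this range (and only partially true for $m=3$, where the two functions share $a$, $c$ and $\alpha$ and differ through the single extra parameter $\kappa$). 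A smaller but relevant misreading: in (b) you set yourself the task of ruling out configurations with $i+j\ge 5$; the theorem does not claim this (it only asserts $i,j\le 3$ together with realizability for $i+j\le 4$), the paper leaves the sharp joint bound open, and making your argument hinge on that exclusion would strand you on a question that is not needed for the statement.
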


 In item~(c) we only give an upper bound for the values $i$ and $j.$ We believe that this upper bound is reached, as happens when $m=3.$ To prove this fact we should develop in more detail our computations but we have decided do not face this question here. Similarly, we think that  the value $i+j$ must have an upper bound smaller that $2(2m-3),$ because the functions $M_1$ and $N_1$ are strongly dependent, see the proof of Proposition~\ref{prop_aux_hol}.

It is well known that smooth quadratic systems can have nested limit cycles, formed by $1,2$ or $3$ limit cycles and also limit cycles forming two disjoint nests with configurations $\{1,1\},$ $\{2,1\}$ and $\{3,1\},$ see~\cite{Zeg} and its references. Moreover, people believe that these are the only possible configurations. Until now, all examples with PWHS having limit cycles present them in a single nest and, already in the degree 1 case, there were  linear PWHS examples with $1,2$ or $3$ nested limit cycles, see~\cite{GASRONSIL1}. In this paper we present the first examples of PWHS with two different nests of limit cycles. Moreover for quadratic PWHS we obtain the following result:

\begin{corollary}
There are quadratic PWHS of the form \eqref{eq_pert}, with $R^\pm_m$ depending only on $z$ and $m\in\{1,2\},$ having two limit cycles with configuration $[[1,1]].$ For cubic PWHS of the same form but with $m=3,$ there are configurations with two nests of the types $[[1,1]],$ $[[2,1]],$ $[[3,1]]$ and $[[2,2]].$
\end{corollary}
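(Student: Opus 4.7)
The plan is to derive the corollary as an essentially immediate consequence of Theorem~\ref{tmc}, together with the translation from zeros of the averaged function to hyperbolic limit cycles provided by Theorem~\ref{tma}. First I would recall the definition of realizability: saying that configuration $[[i,j]]$ is realizable means that one can choose the coefficients of $R^\pm_m$ so that the first order averaged functions $M_1$ (at the center $-1$) and $N_1$ (at the center $1$) have simultaneously $i$ and $j$ simple zeros in $(0,1)$. By Theorem~\ref{tma}, each such simple zero produces, for $\varepsilon$ sufficiently small, a hyperbolic limit cycle of the PWHS converging to the corresponding periodic orbit of the unperturbed system as $\varepsilon \to 0$.

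The key structural observation is that $\dot z = {\rm i}(z^2-1)/2$ possesses the invariant straight line $\operatorname{Re}(z)=0$ separating the two period annuli around $-1$ and $1$. For $\varepsilon>0$ small enough the hyperbolic limit cycles bifurcating from the periodic orbits of each annulus are small $\mathcal{O}(\varepsilon)$-perturbations of those orbits and hence remain strictly contained in their respective half-planes $\{\operatorname{Re}(z)<0\}$ and $\{\operatorname{Re}(z)>0\}$. This guarantees that the $i$ limit cycles around $-1$ and the $j$ limit cycles around $1$ form two genuinely disjoint nests, matching the notion of two separate nests in the statement.

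With this established, the corollary reduces to checking that the listed configurations satisfy the realizability conditions of Theorem~\ref{tmc}. For $m\in\{1,2\}$, the configuration $[[1,1]]$ corresponds to $i+j=2$, which is covered by part~(a) of Theorem~\ref{tmc}, yielding the first family of examples. For $m=3$, the four configurations $[[1,1]], [[2,1]], [[3,1]], [[2,2]]$ have $i+j\in\{2,3,4,4\}$ and $\max(i,j)\le 3$, so they all satisfy $0\le i+j\le 4$ and $i,j\le 3$, matching the realizable configurations of part~(b) of Theorem~\ref{tmc}.

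The main obstacle does not really lie in this corollary but rather in the proof of Theorem~\ref{tmc}(b) itself, namely in exhibiting explicit choices of the coefficients $a^\pm_{k,l}$ realizing each of the four specific pairs $(i,j)$. Since both functions $r\, M_1(r)$ and $r\, N_1(r)$ (multiplied by $(r^2-1)^{m-3}$) lie in the ECT-system $\mathcal{G}$ on $(0,1)$, the number of zeros of each one can be controlled exactly; the delicate point is that the coefficients of $M_1$ and $N_1$ are not independent (the symmetry $z\mapsto -z$ couples them), so producing the prescribed numbers of zeros \emph{simultaneously} requires a careful parameter count. Once the realizability is settled at the level of $M_1$ and $N_1$, the conclusion of the corollary follows without further work.
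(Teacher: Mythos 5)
Your proposal is correct and follows essentially the same route as the paper, which states this corollary as an immediate consequence of Theorem~\ref{tmc}: parts (a) and (b) give the realizability of the listed configurations, and the definition of realizability together with the averaging result (Theorem~\ref{tma}) converts the simultaneous simple zeros of $M_1$ and $N_1$ into the two disjoint nests of hyperbolic limit cycles. Your added remark on the disjointness of the nests and your observation that the real work lies in Theorem~\ref{tmc}(b) are both accurate (only note that in the holomorphic case the relevant ECT-system involves $M_1$ and $N_1$ multiplied by $(r^2-1)^{m-3}$, without the extra factor $r$ of the general complex case).
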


The paper is organized as follows. In Section \ref{sec:Preliminaries} we present some basic results and the proof  of Theorem~\ref{tma}.  Then we dedicate next two sections to prove Theorems~\ref{tmb}, and~\ref{tmc}.  The most tedious  computations, devoted to obtain the first order averaged functions $M_1$ and $N_1,$ are deferred to the Appendix~\ref{app}. 

\section{Preliminaries and  proof  of Theorems~\ref{tma}}\label{sec:Preliminaries}
In this section first we recall some results that will be used throughout the paper. Then we  prove Theorem~\ref{tma} and by using it and the computations of Appendix~\ref{app} we obtain the explicit expressions of first order averaged functions $M_1$ and $N_1.$

\subsection{The averaging method}\label{aver_metd}

We briefly recall some basic results of the averaging theory for piecewise smooth systems written in polar coordinates. An overview on this subject can be found in \cite{MR3729598}, and the reader can see the details of the proofs there. Consider the piecewise smooth systems of the form 
\begin{equation}\label{polar_sys}
\dfrac{dr}{d\theta}=\begin{aligned}
\left\{\begin{array}{l}
F^+(\theta,r,\epsilon) \text{ if } 0\leq\theta\leq\pi,\\[5pt]
F^-(\theta,r,\epsilon) \text{ if } \pi\leq\theta\leq2\pi,
\end{array} \right.
\end{aligned}
\end{equation}
where $F^\pm(\theta,r,\epsilon)=\sum_{j=1}^{k}\epsilon^jF_j^\pm(\theta,r)+\epsilon^{k+1}R^\pm(\theta,r,\epsilon),$ with $\theta\in S^1,$ $r>0$ and $\epsilon>0$ is a sufficiently small parameter. 

The following result can be found in \cite[Theorem 1]{MR3729598}:
\begin{theorem}\label{prop3} Let $M_1$ be the averaged function of order~1  given by \eqref{abeliana}.  Then, each simple zero $r=r_0$ of  $M_1$ provides, for $\epsilon$ small enough, a hyperbolic limit cycle of the piecewise smooth system \eqref{polar_sys} that tends to $r=r_0$ when $\epsilon$ tends to 0.
\end{theorem}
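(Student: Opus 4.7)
The plan is to reduce the statement to the standard first-order averaging result by constructing a Poincar\'e return map on a cross-section, expanding it to first order in $\epsilon$, and invoking the implicit function theorem at simple zeros of $M_1$.

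First, I would verify that, for $\epsilon$ small enough, orbits of \eqref{polar_sys} cross the discontinuity set $\theta\in\{0,\pi\}$ transversally, so that by the Filippov convention the solution through the cross-section $\Sigma_0=\{(r,0):r>0\}$ is obtained simply by concatenation. The return map on $\Sigma_0$ then factors as $P(\cdot,\epsilon)=P^-(\cdot,\epsilon)\circ P^+(\cdot,\epsilon)$, where $P^+$ is the flow of $dr/d\theta=F^+(\theta,r,\epsilon)$ from $\theta=0$ to $\theta=\pi$ and $P^-$ is the flow of $dr/d\theta=F^-(\theta,r,\epsilon)$ from $\theta=\pi$ to $\theta=2\pi$. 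Each factor depends smoothly on $(r,\epsilon)$ by the classical parameter-dependence theorem for ODEs.

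Next, I would expand each half-flow. Writing $r^\pm(\theta;r_0,\epsilon)=r_0+\epsilon r_1^\pm(\theta;r_0)+\mathcal{O}(\epsilon^2)$ and substituting into $dr/d\theta=\epsilon F_1^\pm(\theta,r)+\mathcal{O}(\epsilon^2)$ gives at order $\epsilon$ the elementary linear equation $dr_1^\pm/d\theta=F_1^\pm(\theta,r_0)$ with $r_1^\pm(0)=0$. Integration, together with the sign convention $M_1^-(r)=\int_0^{-\pi}F_1^-\,d\theta$ (which by $2\pi$-periodicity equals $-\int_\pi^{2\pi}F_1^-\,d\theta$), yields $P^+(r_0,\epsilon)=r_0+\epsilon M_1^+(r_0)+\mathcal{O}(\epsilon^2)$ and $P^-(r_1,\epsilon)=r_1-\epsilon M_1^-(r_1)+\mathcal{O}(\epsilon^2)$. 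Composing produces the displacement function
\[
D(r_0,\epsilon):=P(r_0,\epsilon)-r_0=\epsilon\bigl[M_1^+(r_0)-M_1^-(r_0)\bigr]+\mathcal{O}(\epsilon^2)=\epsilon M_1(r_0)+\mathcal{O}(\epsilon^2).
\]

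Finally, I would apply the implicit function theorem to $\widetilde D(r_0,\epsilon):=D(r_0,\epsilon)/\epsilon$, which extends smoothly to $\epsilon=0$ with value $M_1(r_0)$. At a simple zero $r_0$ one has $\partial_r\widetilde D(r_0,0)=M_1'(r_0)\ne 0$, so a unique branch $r_\epsilon=r_0+\mathcal{O}(\epsilon)$ satisfies $D(r_\epsilon,\epsilon)=0$, providing the claimed limit cycle. Hyperbolicity is automatic since $\partial_r P(r_\epsilon,\epsilon)=1+\epsilon M_1'(r_0)+\mathcal{O}(\epsilon^2)\ne 1$ for small $\epsilon\ne 0$. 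The main technical obstacle is not the averaging computation itself but rather the careful justification that the Filippov flow near $\Sigma$ genuinely reduces to a concatenation of two smooth half-flows; for the unperturbed system $dr/d\theta\equiv 0$, so $\Sigma$ is crossed transversally, and this transversality persists for small $\epsilon$ by continuity.
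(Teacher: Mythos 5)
Your proposal is correct, but it should be said up front that the paper does not prove this statement at all: Theorem~\ref{prop3} is quoted verbatim from the literature (Theorem~1 of \cite{MR3729598}), so there is no internal proof to compare against. What you have written is the standard self-contained argument for the first-order case: write the return map on $\Sigma_0$ as the composition $P^-\circ P^+$ of the two half-flows of \eqref{polar_sys}, expand each half-flow to first order in $\epsilon$ (your handling of the sign convention $M_1^-(r)=\int_0^{-\pi}F_1^-\,d\theta=-\int_\pi^{2\pi}F_1^-\,d\theta$, using $2\pi$-periodicity, is the one delicate bookkeeping point and you got it right), obtain the displacement $D(r_0,\epsilon)=\epsilon M_1(r_0)+\mathcal{O}(\epsilon^2)$ with $M_1$ as in \eqref{abeliana}, and apply the implicit function theorem to $D/\epsilon$ at a simple zero, with hyperbolicity read off from $\partial_r P=1+\epsilon M_1'(r_0)+\mathcal{O}(\epsilon^2)$. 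This is essentially how the first-order statement is established in the cited source as well; the difference is that \cite{MR3729598} proves a more general higher-order averaging theorem (where zeros are detected via Brouwer degree rather than only the implicit function theorem), of which Theorem~\ref{prop3} is the simplest instance. One small remark on your ``main technical obstacle'': once the system is already written in the form \eqref{polar_sys} with $\theta$ as independent variable, crossing of the switching rays $\theta\in\{0,\pi\}$ is automatic (the angular variable is strictly increasing by assumption), so the concatenation of the two smooth half-flows needs no extra Filippov analysis; the transversality issue you mention really belongs to the earlier step of passing from \eqref{eq_pert} to \eqref{polar_sys}, not to this theorem. You should also state explicitly the regularity you use (that $F_1^\pm$ and the $\mathcal{O}(\epsilon^2)$ remainders are $C^1$ in $r$, uniformly for small $\epsilon$) so that the implicit function theorem applies to $\widetilde D$.
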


\subsection{A miscellany of results}\label{misc}
 
A very useful and well-kown characterization of extended
complete Chebyshev system (ECT-system) is the following:
\begin{lemma}\label{wrons}
    $[f_0,\cdots, f_n]$ is an ECT-system on $I\subset\mathbb{R},$ an open interval, if and only if for all $k=0,1,\cdots,n,$ $W_k(x)\neq0$ for all $x\in I$, where
$$W_k(x)=W [f_0, . . . , f_k](x)= \det\left(f^{(i)}_j(x)\right)_{0\leq i,j\leq k}$$
is the Wronskian of $f_0,\cdots, f_k$ at $x\in I$.
\end{lemma}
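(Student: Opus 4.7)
The plan is to prove both directions by induction on $n$, using the standard reduction that turns an ECT-system into a smaller one via the quotient-and-derivative trick. The key analytic input is a generalized Rolle's theorem that respects multiplicities: if a $\mathcal{C}^n$ function has $m$ zeros on $I$ counted with multiplicity, then its derivative has at least $m-1$ such zeros.

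For the easy ($\Leftarrow$ implies necessity) direction, I would argue by contraposition. Suppose $W_k(x_0)=0$ for some $k\le n$ and some $x_0\in I$. Then the $(k+1)\times(k+1)$ matrix $\bigl(f_j^{(i)}(x_0)\bigr)_{0\le i,j\le k}$ is singular, so there exist constants $c_0,\dots,c_k$, not all zero, with $\sum_{j=0}^k c_j f_j^{(i)}(x_0)=0$ for every $i=0,\dots,k$. The function $f=\sum_{j=0}^k c_j f_j$ therefore has a zero of multiplicity at least $k+1$ at $x_0$, whereas being a nontrivial linear combination of an ET-system of dimension $k+1$ it should have at most $k$ zeros counted with multiplicity. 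Hence $[f_0,\dots,f_k]$ is not an ET-system and $[f_0,\dots,f_n]$ is not an ECT-system.

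For the harder ($\Rightarrow$ sufficiency) direction I would proceed by induction on $n$. For $n=0$ the non-vanishing of $W_0=f_0$ trivially says $c_0 f_0$ has no zeros when $c_0\ne0$. Assume the statement for $n-1$. Given that $W_0,\dots,W_n$ are nowhere zero on $I$, in particular $f_0$ never vanishes, so we may set
\[
g_i(x)=\left(\frac{f_i(x)}{f_0(x)}\right)',\qquad i=1,\dots,n.
\]
The classical Wronskian identity
\[
W[f_0,f_1,\dots,f_k](x)=f_0(x)^{k+1}\,W[g_1,\dots,g_k](x),\qquad k=1,\dots,n,
\]
then shows that all the Wronskians of $[g_1,\dots,g_n]$ are non-vanishing on $I$, so by the induction hypothesis $[g_1,\dots,g_n]$ is an ECT-system on $I$. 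To conclude that $[f_0,\dots,f_n]$ is an ET-system, take any nontrivial $f=\sum_{i=0}^n c_i f_i$ and suppose it has at least $n+1$ zeros on $I$ counted with multiplicity. Then $f/f_0$ has the same zeros with the same multiplicities, and Rolle-with-multiplicity gives at least $n$ zeros for
\[
\left(\frac{f}{f_0}\right)'=\sum_{i=1}^n c_i\, g_i,
\]
contradicting that $[g_1,\dots,g_n]$ is an ET-system. The same argument applied to any sub-tuple $[f_0,\dots,f_k]$ (whose Wronskians are a sub-collection of the $W_j$'s, still non-vanishing) yields the ET-property at every level, which is exactly the ECT-condition.

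The main obstacle, and the place where care is needed, is the generalized Rolle's theorem with multiplicities: if one works only with simple zeros the argument above proves the weaker CT-characterization, and the passage from T-systems to ET-systems requires tracking how multiplicities drop by exactly one under differentiation (including at endpoints of the interval and when several multiple zeros are distinct). Since this result is a classical fact from Chebyshev-system theory (see, e.g., the monograph of Karlin and Studden), in the actual paper it will likely be cited rather than reproved; nevertheless the induction above makes the structure transparent and explains why the successive non-vanishing of the Wronskians $W_0,\dots,W_n$ is exactly what is needed.
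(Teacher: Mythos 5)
Your argument is correct, but note that the paper itself does not prove Lemma~\ref{wrons} at all: it is quoted as a ``very useful and well-known characterization'' of ECT-systems, with the classical literature (Karlin's monograph, cited for Theorem~\ref{teo_wrons}) serving as the reference, exactly as you anticipated in your closing remark. What you have written is the standard Karlin--Studden proof: necessity by exhibiting, from a vanishing Wronskian, a nontrivial combination with a zero of multiplicity $k+1$ at $x_0$; sufficiency by induction through the quotient-and-derivative reduction $g_i=(f_i/f_0)'$, the identity $W[f_0,\dots,f_k]=f_0^{k+1}W[g_1,\dots,g_k]$, and Rolle's theorem with multiplicities. The only points worth tightening if you were to write it out in full are the ones you already flagged: the multiplicity-respecting Rolle step (a zero of order $\mu$ drops to order $\mu-1$, plus one extra simple zero between consecutive distinct zeros, which needs the $\mathcal{C}^n$ hypothesis implicit in the definition), and the degenerate case $c_1=\cdots=c_n=0$, $c_0\neq0$ in the inductive step, where the contradiction comes from $f_0$ being nonvanishing rather than from the induction hypothesis. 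So your proposal supplies a complete and essentially canonical proof of a statement the paper deliberately leaves unproved; it buys self-containedness at the cost of reproducing textbook material.
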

This result allows us to estimate the number of real zeros of any non-zero function $F\in \operatorname{Span}\{f_0,\cdots, f_n\}$, where $\operatorname{Span}(\mathcal{F})$ denotes the set of all functions given by linear combinations of the functions of $\mathcal{F}$. In what follows, we state a classical result related to the ECT-system, whose proof can be found in \cite{Karlin}. 
\begin{theorem}\label{teo_wrons}
     Let $\mathcal{F}=[f_0, . . . , f_n]$ be an ECT-system on
$I$. Then, the number of isolated zeros for every element of $\operatorname{Span}(\mathcal{F})$ does not exceed $n$. Moreover, for each configuration
of $m\leq n$ zeros in $I,$ taking into account their multiplicity, there exists
$F\in \operatorname{Span}(\mathcal{F})$ with this configuration of zeros.
\end{theorem}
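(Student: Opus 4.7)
The plan is to prove both assertions by induction on $n$, reducing each step to an ECT-system of length $n-1$ via a classical Wronskian identity combined with Rolle's theorem. The base case $n=0$ is immediate: since $W_0=f_0$ does not vanish on $I$, any nonzero element $c_0 f_0$ of $\operatorname{Span}(\mathcal{F})$ has no zeros, and the empty configuration of zeros is trivially realizable.

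For the inductive step, I would introduce the reduced family $g_j = (f_j/f_0)'$ for $j=1,\ldots,n$ and establish by elementary column operations on determinants the identity
\begin{equation*}
W[g_1,\ldots, g_k](x) \;=\; \frac{W[f_0, f_1, \ldots, f_k](x)}{f_0(x)^{k+1}}, \qquad k=1,\ldots,n.
\end{equation*}
Since the ECT hypothesis ensures all these quotients are nonvanishing on $I$, the family $[g_1,\ldots, g_n]$ is itself an ECT-system, now of length $n-1$. Given any nonzero $F=\sum_{j=0}^n c_j f_j$ with $N$ isolated zeros (counted with multiplicity), $F/f_0$ has the same isolated zeros as $F$ (because $f_0$ does not vanish), and Rolle's theorem then produces at least $N-1$ zeros of $(F/f_0)'=\sum_{j=1}^n c_j g_j$. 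The inductive hypothesis, applied to the reduced ECT-system, forces $N-1 \leq n-1$, hence $N\leq n$; the degenerate case $c_1=\cdots=c_n=0$ reduces to $F=c_0 f_0$, which has no zeros.

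For the realization part, given a prescribed configuration $(x_1,\mu_1),\ldots,(x_s,\mu_s)$ in $I$ with $\mu_1+\cdots+\mu_s = m \leq n$, the $m$ linear conditions $F^{(l)}(x_i)=0$ for $0\leq l < \mu_i$ on the $(n+1)$-dimensional space $\operatorname{Span}(\mathcal{F})$ cut out a subspace of dimension at least $n+1-m\geq 1$. I would then exhibit a concrete $F$ as a determinantal interpolant whose first row is $(f_0(x), f_1(x),\ldots, f_n(x))$ and whose remaining $n$ rows encode the values and derivatives $f_j^{(l)}(x_i)$ at the nodes; the non-vanishing of the relevant sub-Wronskians, inherited from the ECT hypothesis, both makes this determinant a nontrivial element of $\operatorname{Span}(\mathcal{F})$ and forces the multiplicity at each $x_i$ to be exactly $\mu_i$.

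The main obstacle I anticipate is precisely this last point. The naive dimension count only guarantees multiplicities \emph{at least} $\mu_i$, so ruling out accidental higher-order contact at the prescribed nodes is where the full strength of the ECT hypothesis (rather than mere linear independence of $f_0,\ldots,f_n$) must be used; equivalently, the upper bound proved in the first part caps the total multiplicity by $n$, and combined with the lower bound coming from the imposed interpolation conditions this pins the local multiplicities to their prescribed values. A secondary technical point is the clean bookkeeping of multiplicities in the Rolle step, which is standard but must be made explicit when zeros of $F$ coalesce.
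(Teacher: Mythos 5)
Your upper-bound half is the standard argument and is fine: the identity $W[g_1,\dots,g_k]=W[f_0,\dots,f_k]/f_0^{k+1}$ for $g_j=(f_j/f_0)'$, the observation that $[g_1,\dots,g_n]$ is again an ECT-system, Rolle's theorem with multiplicities, and the degenerate case $c_1=\cdots=c_n=0$ give $N\le n$ by induction. (For what it's worth, the paper does not prove this theorem at all; it quotes it as classical and refers to Karlin, so the comparison here is with the standard proof rather than an in-paper one.)

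The realization half, however, has a genuine gap, precisely at the point you flag and then claim to resolve. Your resolution --- ``the upper bound caps the total multiplicity by $n$, and combined with the lower bound from the imposed interpolation conditions this pins the local multiplicities'' --- only works when $m=n$. When $m<n$ the two bounds leave room for up to $n-m$ extra zeros, or extra contact at the prescribed nodes, so neither the dimension count on the full $(n+1)$-dimensional span nor your determinant forces the zero set of $F$ to be \emph{exactly} the prescribed configuration; moreover the determinant you describe is ill-specified for $m<n$, since only $m$ rows of interpolation data are available to fill the $n$ rows you posit. The standard repair is to work inside the initial segment of the system: by the Wronskian characterization (Lemma~\ref{wrons}), $[f_0,\dots,f_m]$ is itself an ECT-system on $I$, so every nonzero element of $\operatorname{Span}[f_0,\dots,f_m]$ has at most $m$ zeros counted with multiplicity. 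Form the $(m+1)\times(m+1)$ confluent determinant whose first row is $\bigl(f_0(x),\dots,f_m(x)\bigr)$ and whose remaining $m$ rows are the data $f_j^{(l)}(x_i)$, $0\le l<\mu_i$. Its cofactor with respect to $f_m(x)$ is the confluent Vandermonde of $[f_0,\dots,f_{m-1}]$ at the configuration, which is nonzero: if it vanished, some nonzero element of $\operatorname{Span}[f_0,\dots,f_{m-1}]$ would vanish at the configuration with total multiplicity $m$, contradicting the bound $m-1$ for that subfamily. Hence $F\not\equiv0$, $F$ vanishes at each $x_i$ to order at least $\mu_i$ (differentiating the first row $l<\mu_i$ times and evaluating at $x_i$ repeats a row), so its total multiplicity is at least $m$; being in the sub-span it is at most $m$; therefore the multiplicities are exactly $\mu_i$ and there are no other zeros. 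With this modification your argument closes; as written, the case $m<n$ --- which is the one the paper actually uses to produce configurations with fewer than the maximal number of simple zeros --- is not covered.
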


In what follows, we provide a simple result for finding the zeros of $k$-parameter families of polynomials in one variable.

Let $F_\lambda(x)$ be a $k$-parametric family of polynomials. We denote the discriminant of a polynomial $p(x) = a_nx^n+\cdots+a_1x+a_0$ as $\Delta_x(p)$, i.e.
$$\Delta_x(p) = (-1)^{\frac{n(n-1)}{2}}\frac{1}{a_n}\operatorname{Res}(p(x),p'(x)),$$
where $\operatorname{Res}(p,p')$ is the resultant of $p$ and $p_0.$

Using the same ideas as in \cite[Lemma 8.1]{Johanna}, it is easy to prove the following result, which will be used throughout the paper.

\begin{lemma}\label{roots_1pf}
Let $F_\lambda(x) = f_n(\lambda)x^n + f_{n-1}(\lambda)x^{n-1}+\cdots+ f_1(\lambda)x+f_0(\lambda)$, $n>1$, be a family of real polynomials depending continuously on a parameter $\lambda\in\mathbb{R}^k$ and set $\Omega_\lambda=(a(\lambda),b(\lambda))$, for some continuous functions $a(\lambda)$ and $b(\lambda)$. Assume that there exists an connected open set $\mathcal{U}\subset\mathbb{R}^k$ such that: 
\begin{itemize}
   \item[(i)] For some $\lambda_0\in\mathcal{U}$, $F_{\lambda_0}$ has exactly $m$ zeros in $\Omega_{\lambda_0}$ and all of them are simple.
    \item[(ii)]  For all $\lambda\in\mathcal{U}$, $F_{\lambda}(a(\lambda))\cdot F_{\lambda}(b(\lambda))\neq0$.
    \item[(iii)] For all $\lambda\in\mathcal{U}$, $\Delta_x(F_\lambda)\neq0$.
  %  \item[(iv)] For all $\lambda\in I$, $f_n(\lambda)\neq0$.
\end{itemize}
Then for all $\lambda\in \mathcal{U}$, $F_{\lambda}$ has also exactly $m$ zeros in $\Omega_{\lambda}$ and all of them are simple. 
\end{lemma}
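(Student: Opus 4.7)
The plan is to prove that the integer-valued function $N(\lambda):=\#\{x\in\Omega_\lambda:F_\lambda(x)=0\}$ is locally constant on $\mathcal{U}$; combined with the connectedness of $\mathcal{U}$ and hypothesis (i), this gives $N(\lambda)=m$ for every $\lambda\in\mathcal{U}$. The simplicity of each such zero follows immediately from (iii), because $\Delta_x(F_\lambda)\neq 0$ forbids $F_\lambda$ to have any multiple root.

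To establish local constancy at an arbitrary $\bar\lambda\in\mathcal{U}$, the first step is to note that condition (iii), together with the definition of $\Delta_x$ (which contains the factor $1/f_n(\lambda)$), forces $f_n(\lambda)\neq 0$ on $\mathcal{U}$, so the degree of $F_\lambda$ stays equal to $n$. I would then invoke the classical continuity of roots of a polynomial with respect to its coefficients: choose $n$ pairwise disjoint open disks $D_1,\dots,D_n\subset\mathbb{C}$ centered at the $n$ pairwise distinct roots of $F_{\bar\lambda}$ (distinct by (iii)); by a standard Rouché-type argument, for every $\lambda$ sufficiently close to $\bar\lambda$ each $D_i$ contains exactly one root of $F_\lambda$.

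The crucial observation is then the following conjugacy argument. If $D_i$ is centered at a \emph{real} root $x_i$ of $F_{\bar\lambda}$, then the unique root of $F_\lambda$ in $D_i$ must itself be real, because otherwise its complex conjugate would also lie in $D_i$ (as $F_\lambda$ has real coefficients), contradicting uniqueness. Symmetrically, by shrinking the disks we can ensure that disks around the non-real roots of $F_{\bar\lambda}$ stay disjoint from the real axis, so the corresponding roots of $F_\lambda$ remain non-real. Hence the real roots of $F_\lambda$ can be locally tracked by continuous real-valued functions $r_1(\lambda),\dots,r_k(\lambda)$, and the count $k$ of real roots is locally constant on $\mathcal{U}$.

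Finally I would use (ii) together with the continuity of $a$, $b$ and of the $r_j$ to rule out any $r_j(\lambda)$ crossing an endpoint of $\Omega_\lambda$ for $\lambda$ close to $\bar\lambda$: such a crossing at a parameter $\lambda^*$ would force $F_{\lambda^*}(a(\lambda^*))=0$ or $F_{\lambda^*}(b(\lambda^*))=0$, contradicting (ii). Consequently the partition of $\{r_1(\lambda),\dots,r_k(\lambda)\}$ into roots lying inside $\Omega_\lambda$ and roots lying outside $[a(\lambda),b(\lambda)]$ is locally constant, so $N$ is locally constant on $\mathcal{U}$. The only point that requires some care is the organization of the continuity-of-roots / conjugacy step, in particular checking that real roots near real roots of $F_{\bar\lambda}$ really cannot bifurcate into a complex conjugate pair; the rest is a routine connectedness argument propagating the conclusion from $\lambda_0$ to the whole of $\mathcal{U}$.
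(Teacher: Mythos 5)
Your proof is correct, and it follows essentially the approach the paper intends: the paper does not prove this lemma itself but only says it follows "using the same ideas" as Lemma 8.1 of the cited reference, namely the standard continuity-of-roots argument in which condition (iii) rules out root collisions (and, read literally through the factor $1/f_n(\lambda)$, a degree drop), condition (ii) rules out real roots entering or leaving $\Omega_\lambda$ through its endpoints, so the number of zeros in $\Omega_\lambda$ is locally constant, and connectedness of $\mathcal{U}$ propagates the count $m$ from $\lambda_0$, with simplicity again coming from the nonvanishing discriminant. Your conjugate-pair observation for keeping real roots real is exactly the right way to make the Rouch\'e step rigorous, so no gap remains.
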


To finish this section we state the well-known  \textit{Descartes Theorem}, which provides information about the number of positive zeros of a real polynomial based on the sign changes and the number of terms. For further details, see, for example, \cite{MR0174165}. Given an ordered list of $p+1$ non-zero real numbers $[a_0, a_1, \ldots, a_p]$, we define the number of sign variations, denoted by $m$ with $0 \le m \le p$, as the number of indices $j \le p - 1$ for which $a_j a_{j+1} < 0$. 

\begin{theorem}[Descartes Theorem]\label{descartes}
	Consider the real polynomial $P(x)=a_{0}x^{i_0}+\dots+a_{p}x^{i_p}$ with $0\leq i_0<\dots<i_p$ and  $a_{j}$ non-zero real constants for $j\in\{0,\ldots,p\}.$ If the number of sign variations  of $[a_0,a_1,\ldots,a_p]$  is $m$, then $P(x)$ has exactly   $m-2n$ positive real zeros counting their multiplicities, where $n$ is a non negative integer number.
\end{theorem}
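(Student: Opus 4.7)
The plan is to establish Descartes' rule by proving two separate claims about $V(P):=m$ (the number of sign variations of the coefficient list) and $Z(P)$ (the number of positive real zeros of $P$ counted with multiplicity): namely, $V(P)\ge Z(P)$ and $V(P)\equiv Z(P)\pmod 2$. Together these yield $Z(P)=V(P)-2n$ for some integer $n\ge 0$, which is exactly what the theorem asserts. As a preliminary reduction, I would factor $P(x)=x^{i_0}\widetilde P(x)$ with $\widetilde P(0)\ne 0$; the factor $x^{i_0}$ contributes no positive roots, and its zero coefficients are absent from the sign-variation count of $P$, so it suffices to work with $\widetilde P$ and assume the constant term is nonzero.

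For the parity, compare signs at $0^+$ and $+\infty$: $\operatorname{sign}(P(0^+))=\operatorname{sign}(a_0)$ and $\operatorname{sign}(P(+\infty))=\operatorname{sign}(a_p)$, so $\operatorname{sign}(a_0\,a_p)=(-1)^{V(P)}$ because each sign variation flips the cumulative sign along the coefficient sequence. On the other hand, since $P$ is continuous on $(0,\infty)$, $\operatorname{sign}(P(0^+)\,P(+\infty))=(-1)^{Z_{\mathrm{odd}}(P)}$, where $Z_{\mathrm{odd}}(P)$ is the number of positive roots of odd multiplicity; and $Z_{\mathrm{odd}}(P)\equiv Z(P)\pmod 2$, because even-multiplicity roots contribute an even number to $Z(P)$ but zero to $Z_{\mathrm{odd}}(P)$. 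Comparing, $V(P)\equiv Z(P)\pmod 2$.

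For the inequality $V(P)\ge Z(P)$, I would argue by induction based on the key lemma: \emph{if $r>0$ and $P(x)=(x-r)\,Q(x)$, then $V(P)\ge V(Q)+1$}. Granted the lemma, write $P(x)=(x-r_1)\cdots(x-r_Z)\,R(x)$ where $r_1,\dots,r_Z$ are the positive roots of $P$ listed with multiplicity and $R$ has no positive roots. Iterating the lemma gives $V(P)\ge V(R)+Z\ge Z=Z(P)$. To prove the lemma itself, I would express the coefficients as $p_i=q_{i-1}-r\,q_i$ (with $q_{-1}=q_{\deg Q+1}=0$) and examine the block structure of $Q$'s coefficient sequence, i.e.\ its maximal runs of constant-sign nonzero coefficients; each internal sign change of $Q$ forces at least one sign change of $P$ in the corresponding index range, and one additional sign change is produced at the trailing end, arising from the convention $q_{-1}=0$ together with the sign of $-r\,q_0$.

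The main obstacle is the sign-variation lemma under multiplication by $(x-r)$, which requires care because of possible zero coefficients in $Q$ and the need to verify that sign changes are preserved rather than destroyed under the transformation $q_i\mapsto p_i=q_{i-1}-r\,q_i$. Once this lemma is in hand, both the parity comparison and the inductive assembly are routine, and combining the two parts gives the equality $Z(P)=m-2n$ with $n\ge 0$ claimed in the statement.
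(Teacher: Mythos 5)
Your proposal is correct, but there is nothing in the paper to compare it with: the paper states Descartes' rule as a classical result and simply refers to \cite{MR0174165}, giving no proof of its own. Your two-step strategy is the standard textbook argument and it assembles correctly: the parity claim follows from $\operatorname{sign}(a_0a_p)=(-1)^{V(P)}$ together with $\operatorname{sign}\bigl(P(0^+)P(+\infty)\bigr)=(-1)^{Z_{\mathrm{odd}}(P)}$ and $Z_{\mathrm{odd}}(P)\equiv Z(P)\pmod 2$, while the inequality $V(P)\ge Z(P)$ follows by peeling off the positive roots with multiplicity once the lemma $V\bigl((x-r)Q\bigr)\ge V(Q)+1$ for $r>0$ is available; the two claims give $Z(P)=m-2n$ with $n\ge 0$. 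The only delicate point is the one you yourself flag, the zero-coefficient bookkeeping in that lemma, and it does go through: after reducing (as you do) to a nonzero constant term one may assume $q_0\neq0$; writing $p_i=q_{i-1}-rq_i$ and letting $\alpha$ run over the top nonzero index of each maximal sign block of $Q$, the coefficient $p_{\alpha+1}=q_\alpha-rq_{\alpha+1}$ is nonzero with the sign of that block (since $q_{\alpha+1}$ is either zero or of the opposite sign), these indices are strictly decreasing, and the trailing coefficient $p_0=-rq_0$ has sign opposite to the last block, which yields the extra variation. So the plan is sound and complete at the level of detail one would expect for this classical statement.
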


\subsection{Proof of Theorem \ref{tma}}
Since $p$ is a center of $\dot{z}=f(z)$, then from \cite{BLT,GGJ} we know that there exists $\phi$ conformal map such that $\phi'(z)f(z)=-{\rm i}\phi(z).$ Using this on $\dot{z}=f(z)+\epsilon R^\pm(z,\overline{z}),$ we get 
\begin{equation}\label{eq_cl}
    \dot{w}=-{\rm i}w+\epsilon L^\pm(w,\overline{w}),
\end{equation}
where $L^\pm(w,\overline{w})=\phi'(\phi^{-1}(w))R^\pm_m(\phi^{-1}(w),\overline{\phi^{-1}(w)}).$ In the $(r,\theta)-$coordinates $w=re^{i\theta}$, \eqref{eq_cl} is converted into
\begin{equation}\label{polar_eq}
    \frac{dr}{d\theta}=\frac{r\epsilon\Big(\operatorname{Re}(L^\pm(re^{{\rm i}\theta},re^{-{\rm i}\theta}))\cos(\theta)+\operatorname{Im}(L^\pm(re^{{\rm i}\theta},re^{-{\rm i}\theta}))\sin(\theta)\Big)}{-r+\epsilon\Big(\operatorname{Im}(L^\pm(re^{{\rm i}\theta},re^{-{\rm i}\theta}))\cos(\theta)-\operatorname{Re}(L^\pm(re^{{\rm i}\theta},re^{-{\rm i}\theta}))\sin(\theta)\Big)}=F^\pm(r,\theta,\epsilon).
\end{equation}
Hence, expanding $F^\pm(r,\theta,\epsilon)$ around $\epsilon=0$, \eqref{polar_eq} is written as
$$\frac{dr}{d\theta}=\epsilon F_1^\pm(\theta,r)+\mathcal{O}(\epsilon^2),$$
where $F_1^\pm(\theta,r)=-\operatorname{Re}(L^\pm(re^{{\rm i}\theta},re^{-{\rm i}\theta}))\cos(\theta)-\operatorname{Im}(L^\pm(re^{{\rm i}\theta},re^{-{\rm i}\theta}))\sin(\theta)$.
Computing the first averaged function
\begin{align*}
M_1^\pm(r)&=\displaystyle\int_0^{\pm\pi} F_1^\pm(\theta,r)d\theta\\
&=-\displaystyle\int_0^{\pm\pi} \operatorname{Im}({\rm i}\overline{L^\pm(re^{{\rm i}\theta},re^{-{\rm i}\theta})})\cos(\theta)-\operatorname{Im}(\overline{L^\pm(re^{{\rm i}\theta},re^{-{\rm i}\theta})})\sin(\theta) d\theta\\
&= -\displaystyle\int_0^{\pm\pi} \operatorname{Im}\Big(\overline{L^\pm(re^{{\rm i}\theta},re^{-{\rm i}\theta})}({\rm i}\cos(\theta)-\sin(\theta))\Big)d\theta\\
&= -\operatorname{Im}\left(\displaystyle\int_0^{\pm\pi}\overline{L^\pm(re^{{\rm i}\theta},re^{-{\rm i}\theta})}\,{\rm i}e^{{\rm i}\theta}d\theta\right)\\
&= -\operatorname{Im}\left(\displaystyle\int_{0}^{\pm\pi}\overline{\phi'(\phi^{-1}(re^{{\rm i}\theta}))R^\pm_m\left(\phi^{-1}(re^{{\rm i}\theta}),\overline{\phi^{-1}(re^{{\rm i}\theta}})\right)}\,{\rm i}e^{{\rm i}\theta}d\theta\right).
\end{align*}
From Proposition \ref{prop3} the result follows.

\subsection{The bifurcations functions $M_1$ and $N_1$}

The following proposition gives us the expressions of the Melnikov functions $M_1$ and $N_1$ as well as the maximum number of zeros that each on of these functions separately can have when  $f(z)={\rm i}(z^2-1)/2.$ The starting  point is to obtain first an explicit expression for $M_1$ by doing a detailed study around $z=-1$. Then, the analysis around $z=1$ will be reduced from the previous one. All these results are detailed in the Appendix~\ref{app}.

\begin{proposition}\label{prop_aux_complex}
	For system~\eqref{eq_pert} with $f(z)={\rm i}(z^2-1)/2$  and $m\le3,$ it holds that
	\begin{align*}
		M_1(r)&=\frac{1}{r}\Big(ar+br^2+cr^3+dr^4+\alpha(r^2-1)^2\operatorname{arctanh}(r)+\beta(r^4-1)\operatorname{arctanh}(r)\\
		&\qquad\quad+\gamma r^2\operatorname{arctanh}(r)\Big),\\
		N_1(r)&=\frac{1}{r}\Big(cr+(b+2d-\kappa+\rho)r^2+ar^3+(-d+\kappa)r^4+\alpha(r^2-1)^2\operatorname{arctanh}(r)\\
		&\qquad\quad-\beta(r^4-1)\operatorname{arctanh}(r)+\gamma r^2\operatorname{arctanh}(r)\Big),
	\end{align*} 
	where, for $m=3,$ the variables $a,b,c,d,\alpha,\beta,\gamma, \kappa, \rho$ can take any real value and depend linearly of the coefficients $a^\pm_{k,l}.$  When $m<3$ only appear the following restrictions: $\gamma=\rho=0$ when $m=2;$ $\gamma=\beta=\kappa=\rho=0$ when $m=1;$ and $\gamma=\beta=\alpha=\kappa=\rho=d=0$ and $c=-a$ when $m=0.$ More specifically, the values of these constants  are given in Remarks~\ref{re:M1} and~\ref{re:N1} in the Appendix.
	
	Moreover,  the maximum number of zeros of each $M_1$ and $N_1$ in $(0,1)$ is 1 when $m=0$ and it is $m+3$ when $1\le m\le 3.$ 
\end{proposition}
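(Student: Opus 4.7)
The plan is to compute $M_1$ directly via Theorem~\ref{tma} with the explicit linearizing map at~$-1$, and then to deduce $N_1$ by symmetry. A direct check shows that $\phi(z)=(1+z)/(1-z)$ satisfies $\phi'(z)f(z)=-\mathrm{i}\phi(z)$ and $\phi(\mathbb{R})\subset\mathbb{R}$, with inverse $\phi^{-1}(w)=(w-1)/(w+1)$ and $\phi'(\phi^{-1}(w))=(w+1)^2/2$. Moreover, $\phi$ maps the left half-plane biholomorphically onto the unit disk, so the period annulus of $-1$ corresponds exactly to $r\in(0,1)$. Substituting into Theorem~\ref{tma} and expanding $R_m^\pm$ according to \eqref{tom}, the integrand becomes, for $w=re^{\mathrm{i}\theta}$, a rational function in $e^{\mathrm{i}\theta}$ with coefficients that are rational in $r$. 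Its antiderivatives are elementary, with logarithmic contributions produced by factors of the form $1/(re^{\mathrm{i}\theta}\pm 1)$; evaluating at the endpoints $\theta=0,\pm\pi$ these logarithms combine into $\log((1+r)/(1-r))=2\operatorname{arctanh}(r)$. Running this computation through every monomial with $0\le k\le l\le m\le 3$ yields the displayed form of $M_1$ and the expressions of $a,b,c,d,\alpha,\beta,\gamma$ as linear forms in the $\overline{a}^\pm_{k,l}$; the case-by-case tabulation is deferred to the Appendix (Remark~\ref{re:M1}).

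For $N_1$, I would invoke the involution $z\mapsto -z$ composed with time reversal $t\mapsto -t$: this preserves $f(z)=\mathrm{i}(z^2-1)/2$, maps the center $+1$ to $-1$, and exchanges the roles of $\Sigma^\pm$. Applying the already derived formula at $-1$ to the transformed perturbation yields $N_1$ for the original system, and tracking the induced relabeling of the coefficients $\overline{a}^\pm_{k,l}$ produces the stated permutation of $a,b,c,d$, the sign reversal on $\beta$, and the appearance of the extra combination $b+2d-\kappa+\rho$ in the $r^2$ coefficient.

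To bound the zeros, I would prove that
\[
\mathcal{F}=[r,r^2,r^3,r^4,(r^2-1)^2\operatorname{arctanh}(r),(r^4-1)\operatorname{arctanh}(r),r^2\operatorname{arctanh}(r)]
\]
is an ECT-system on $(0,1)$ by verifying via Lemma~\ref{wrons} that each successive Wronskian $W_k$ is non-vanishing there. The first four Wronskians are elementary polynomial identities with evident sign on $(0,1)$. For $k\ge 4$, the derivatives $(\operatorname{arctanh}(r))^{(j)}$ introduce factors of $(1-r^2)^{-j}$; after clearing the common denominator the Wronskian numerators are explicit polynomials whose positivity on $(0,1)$ can be established by Descartes' rule (Theorem~\ref{descartes}) combined with evaluation at $r=0$. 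Theorem~\ref{teo_wrons} then caps the zero count of any element of $\operatorname{Span}(\mathcal{F})$ at $6$; restricted to the subspaces dictated by each $m$ (respectively of dimensions $2,5,6,7$), this gives the upper bounds $1,4,5,6$ for $m=0,1,2,3$. Finally, the realizability of these bounds follows from checking that the linear map $(a^\pm_{k,l})\mapsto (a,b,c,d,\alpha,\beta,\gamma)$ is surjective onto the corresponding subspace, a direct verification from the explicit formulas in the Appendix, combined with the existence statement in Theorem~\ref{teo_wrons}.

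The main obstacle I anticipate is twofold. First, the careful bookkeeping of the $z\mapsto -z$, $t\mapsto -t$ argument used to obtain $N_1$ from $M_1$: one must verify that the induced swap of $R^\pm_m$ together with the sign changes on the monomials produces precisely the asymmetric pattern $c\leftrightarrow a$, $d\mapsto -d+\kappa$, $\beta\mapsto -\beta$, and the $b+2d-\kappa+\rho$ coefficient. Second, the verification that the three Wronskians involving $\operatorname{arctanh}(r)$ have numerators that remain strictly positive throughout $(0,1)$, which may require manipulation of polynomials of non-trivial degree and a careful analysis of their behavior as $r\to 1^-$, where $\operatorname{arctanh}(r)$ becomes singular.
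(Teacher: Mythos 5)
Your route is essentially the paper's own: Theorem~\ref{tma} with the linearization $\phi(z)=(1+z)/(1-z)$ and term-by-term integration of the monomials of $R_m^\pm$ for $M_1$, the change $w(t)=-z(-t)$ (which swaps $\Sigma^\pm$ and sends $a^\pm_{k,l}\mapsto(-1)^l a^\mp_{k,l}$) to deduce $N_1$, and the ECT/Wronskian machinery of Lemma~\ref{wrons} and Theorem~\ref{teo_wrons} for the zero count, with realizability coming from the fact that the coefficient map is onto. However, there is a genuine gap in the key analytic verification. You assert that, after clearing denominators, the Wronskians of order $\ge 4$ have \emph{polynomial} numerators whose positivity follows from Descartes' rule. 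That is not the case: since $\operatorname{arctanh}(r)$ enters $\mathcal{F}$ multiplied by three different polynomials, the Wronskians $W_4,W_5,W_6$ genuinely contain $\operatorname{arctanh}$; for instance $W_4(r)=96\big(r(5r^2-3)+3(r^2-1)^2\operatorname{arctanh}(r)\big)/(r^2-1)^2$, and similarly for $W_5,W_6$. Descartes' rule (Theorem~\ref{descartes}) simply does not apply to such expressions, so the step as you describe it would fail, and you flag the difficulty at the end without offering a workable substitute. The paper's resolution is a monotonicity argument: $W_j(0)=0$ and $W_j'(r)>0$ on $(0,1)$, where $W_4'$ turns out to be rational and explicitly positive, while for $W_5'$ and $W_6'$ one bounds $\operatorname{arctanh}(r)$ from below by its Taylor polynomials $r+r^3/3+r^5/5$ and $r+r^3/3+r^5/5+r^7/7$ (the polynomial multiplying $\operatorname{arctanh}$ in each derivative is positive), reducing the question to a true polynomial inequality. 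Some such device is indispensable; without it the ECT property, and hence the bound $m+3$, is not established.

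A smaller imprecision: for $m=0$ the bound of one zero does not follow from "the subspace has dimension $2$", because the constrained family $a(r-r^3)+br^2$ (coming from $c=-a$) is not spanned by an initial segment of $\mathcal{F}$, and a general $2$-dimensional subspace of a $7$-dimensional ECT span only inherits the bound $6$. You must either check directly that $[r-r^3,r^2]$ is itself an ECT pair on $(0,1)$, or argue as the paper does: $M_1(r)=a+br-ar^2$ and $N_1(r)=-a+br+ar^2$ each have roots whose product is $-1$, so at most one root lies in $(0,1)$. For $m=1,2$ your dimension count is fine precisely because the restricted families correspond to initial segments of $\mathcal{F}$, which are automatically ECT.
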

\begin{proof}  The expression of $M_1$ is given in Proposition~\ref{M1} and Remark~\ref{re:M1} of the Appendix~\ref{app}. We remark that we use the linearizing change $w=(1+z)/(1-z).$
	To get the expression of $N_1$ in terms of the coefficients of $M_1$, it is enough to use Proposition~\ref{N1} and Remark~\ref{re:N1} of that Appendix, by changing  $a_{k,l}^\pm$ in $M_1$  by $(-1)^{l}a_{k,l}^\mp,$ for all $0\leq k\leq l$ and $0\leq l\leq 3.$ Then  we arrive to the expression of $N_1$ of the statement and all the restrictions given there. We skip the details.
	
	Let us study the maximum number of zeros  of each of the functions $M_1$ and $N_1$ in $(0,1),$ separately, in terms of $m.$
	
	The case $m=0$ is simpler and we study it in a different way. In this situation 
	\[
	M_1(r)=a+br-ar^2,\qquad N_1=-a+br+ar^2,
	\]
	for arbitrary values $a=\operatorname{Im}(a_{ 0,0}^+)-\operatorname{Im}(a_{ 0,0}^-)$ and   $b=-\pi(\operatorname{Re}(a_{ 0,0}^-) + \operatorname{Re}(a_{ 0,0}^+))$. It is easy to see each of the functions has at most one zero in $(0,1).$
	
	Let us continue by studying the case $m=3.$ We want to prove that the ordered set of functions $$\mathcal{F}=[r,r^2,r^3,r^4,(r^2-1)^2\operatorname{arctanh}(r),(r^4-1)\operatorname{arctanh}(r),r^2\operatorname{arctanh}(r)]$$
	is an ECT-system in $(0,1).$  Notice that $M_1,N_1\in \operatorname{Span}($$\mathcal{F}$) . We will use Theorem~\ref{teo_wrons} together with  Lemma~\ref{wrons}. So, we need to compute several Wronkskians and prove that they do not vanish on $(0,1).$ We get,
	$W_0(r)=r,$ $W_1(r)=r^2,$ $W_2(r)=2r^3,$ $W_3(r)=12r^4,$ 
	\begin{align*}
		W_4(r)&=\frac{96(r(5 r^2-3)+3(r^2-1)^2\operatorname{arctanh}(r)  )}{(r^2-1)^2} ,\\	
		W_5(r)&=\frac{3072(-15r+22r^3-3r^5+3(r^2-1)^2 (5+r^2)\operatorname{arctanh}(r))}{(r^2-1)^6},\\
		W_6(r)&=\frac{294912r(-r(105-145r^2+15r^4+9r^6) +3(r^2-1)^2(35+10r^2+3r^4)\operatorname{arctanh}(r))}{(r^2-1)^{12}}.
	\end{align*}	
	Clearly, the Wronskians $W_j(r)\neq 0$ at $(0,1),$ for all $j\in\{0,1,2,3\}.$ Let us prove that $W_j(r)\neq 0,$ for $j=4,5,6$ on $(0,1).$ 
	
	By computing the first derivative of $W_4$ we obtain
	$$W_4'(r)=\frac{768r^4}{(1-r^2)^3}>0,$$ for all $r\in(0,1)$. Since $W_4(0)=0$ and $W_4$ is increasing at $(0,1)$, we  conclude that $W_4(r)>0$ for all $r\in(0,1)$.  
	
	Similarly, 
	\begin{align*}
		W_5'(r)&=\dfrac{18432(U(r)+V(r)\operatorname{arctanh}(r))}{(1-r^2)^7}>\dfrac{18432(U(r)+V(r)(r+\frac{r^3}{3}+\frac{r^5}{5}))}{(1-r^2)^7}\\[0.3cm]
		&=\dfrac{18432r^6(16+r^2+20r^4+3r^6)}{5(1-r^2)^7}>0,
	\end{align*}
	where $U(r)=-21r^2+32r^4-3r^6$ and $V(r)=3r(r^2-1)^2(7+r^2)$ and we have used that $\operatorname{arctanh}(r)> r+{r^3}/{3}+{r^5}/{5},$ for all $r\in(0,1)$, which can be proven for instance by using Taylor's formula. Again $W_5(0)=0$ and $W_5(r)$ is increasing, and so we can also conclude that $W_5(r)>0$ for all $r\in(0,1)$.  
	
	Finally,
	\begin{align*}
		W_6'(r)&=\dfrac{294912 \Big(U(r)+V(r)\operatorname{arctanh}(r)\Big)}{(1-r^2)^{13}}>\dfrac{294912 \Big(U(r)+V(r)(r+\frac{r^3}{3}+\frac{r^5}{5}+\frac{r^7}{7})\Big)}{(1-r^2)^{13}}\\[0.3cm]&=\dfrac{294912r^9 (255 + 1824 r^4 + 474 r^6 + 135 r^8)}{7(1-r^2)^{13}}>0,
	\end{align*}
	where $U(r)=-r(105+1910r^2-2864r^4+330r^6 +135r^8)$ and $V(r)=15(r^2-1)^2(7+139r^2+37r^4+9r^6),$ and this time we have used that $\operatorname{arctanh}(r)> r+{r^3}/{3}+{r^5}/{5}+{r^7}/{7},$ for all $r\in(0,1).$ As in the previous case  we get that $W_6(r)>0$ for all $r\in(0,1)$.

	According to Lemma \ref{wrons}, $\mathcal{F}$ is an ECT-system. Then, by Theorem \ref{teo_wrons}, $6=m+3$ is the maximum
	number of zeros for any element of $\operatorname{Span}(\mathcal{F})$ in $(0,1)$ and there are choices for $a,b,c,d,\alpha,\beta$ and $\gamma$ such that an element of $\operatorname{Span}(\mathcal{F})$  has exactly $j$ simple zeros in $(0,1)$ for any $j=0,1,\ldots,6.$

	From the expressions of the involved constants, the results when $m=1,$ (resp. $m=2$) correspond to $\kappa=\rho=\beta=\gamma=0$  (resp. $\rho=\gamma=0$) follow similarly. Moreover it is clear that the number of elements of $\mathcal{F}$ is $m+4,$ form an ECT-system on $(0,1)$ with at most $m+3$ zeros, taking into account their multiplicities, and the result follows.
\end{proof}

\section{Proof of Theorem \ref{tmb} and the polynomial case}

To study the number of zeros that the functions $M_1$ and $N_1$ can have in $(0,1)$ is a difficult question because of their transcendental nature. In Theorem~\ref{tmb} we only obtain partial results. In the first part of this section we will prove this theorem.

In the particular case  $\alpha=\beta=\gamma=0$ both functions become polynomial and the study is much more affordable. Although this case gives less limit cycles we include a detailed study because we believe that itself provides an interesting problem.

\begin{proof}[Proof of Theorem~\ref{tmb}]

Recall that by Proposition \ref{prop_aux_complex}, the bifurcation function of PWHS~\eqref{eq_pert}  associated to $z=-1$ and $z=1$ are given respectively by $M_1(r)=\frac{1}{r}f(r)$ and $N_1(r)=\frac{1}{r}g(r),$ where
\begin{align*}
f(r)&=ar+br^2+cr^3+dr^4+\alpha(r^2-1)^2\operatorname{arctanh}(r)\\&\quad+\beta(r^4-1)\operatorname{arctanh}(r)+\gamma r^2\operatorname{arctanh}(r),\\
	g(r)&=cr+(b+2d-\kappa+\rho)r^2+ar^3+(-d+\kappa)r^4+\alpha(r^2-1)^2\operatorname{arctanh}(r)\\
	&\quad-\beta(r^4-1)\operatorname{arctanh}(r)+\gamma r^2\operatorname{arctanh}(r).
\end{align*} 
with $a,b,c,d,\alpha,\beta,\gamma,\kappa$ and $\rho$ real coefficients, which depend of coefficients of the system and of~$m.$ Moreover, the maximum number of zeros that these functions separately can have is $1$ when $m=0$ and $m+3$ otherwise. Let us study the number of simple simultaneous  zeros that $f$ and $g$ possess in $(0,1)$ in several situations. Clearly, these zeros coincide  with the  of zeros in $(0,1)$ of $M_1$ and $N_1,$ respectively. Recall that we denote these number of zeros as $m_1,n_1,$ respectively, and they give the realizability of the configuration $[[m_1,n_1]]$ for  PWHS~\eqref{eq_pert}.

We will fix $m\in\{0,1,2,3\}$ and prove the result case by case. 

When $m=0$  the proof is very simple. We know that 	
$M_1(r)=a+br-ar^2$ and  $N_1(r)=-a+br+ar^2,$ 
for arbitrary values $a,b\in\mathbb{R}$. Then, it is easy to see that either $m_1=n_1=0$ or one of the values is 0 and the other one is 1.

For the cases $1\le m\le 3,$ we will not give all details, but a procedure that allows to control the  number of zeros of $f$ and $g$ by forcing the existence of several zeros of them in $(-1,1).$

(a) When $m=1,$ then $\kappa=\rho=\beta=\gamma=0.$ The case $\alpha=0$ is much simpler and will be studied in next Proposition~\ref{pro:pol}, so we consider $\alpha\ne0$ and we can
assume that $\alpha=1.$ 

Then, the main idea of our approach is to consider four different values $r_1,r_2,r_3$ and $r_4$ in $(-1,1)$ and then impose that four equations among the eight ones: $f(r_j)=0, g(r_j)=0, j=1, 2, 3, 4$ are fulfilled.  Then these four equations fix  the values of $a,b,c$ and $d$ and it is easy to obtain them even explicitly, because the eight equations are linear with unknowns $a,b,c,d.$

In this way,  any of the configurations $[[m_1,n_1]]$ with $m_1+n_1\le4,$  can be obtained. Notice that the negative values of $r_j$ give zeros of $f$ or $g$ that do not contribute to any of the values $m_1$ or $n_1,$ because only simple positive zeros give rise to limit cycles of PWHS~\eqref{eq_pert}.

Notice also that when all the procedure is applied, to be sure that a configuration happens we need to prove that the forced zeros are simple. This is not always an easy task but it can be done with a case by case study. For instance, if for $0<m\le 3,$  $m_1=m+3,$ then they are always simple zeros of $f$ because this function is an element of an ECT system. If $m_1<m+3$ and not all zeros were simple, then it is easy to perturb the function to have at least $m_1$ simple zeros. Afterwards, one has to take care of the zeros of $g.$ Each situation needs special tricks and sometimes some careful computations. Finally, it has to be studied if the given zeros are  the only ones in $(0,1)$ or some extra zero does appear. Although this could be done, again by a case by case study, we do not give details on this matter. The main reason is that without studying this last question we already know that at least $m_1$ limit cycles surrounding $z=-1$ and  $n_1$ limit cycles surrounding $z=1$ exist although, eventually, more limit cycles could also appear.

As an illustration we present  a detailed study for the case $(m_1,n_1)=(4,0).$ Fix $r_j=j/5,$ for $j=1,2,3,4$ and force that all these values are zeros of $f.$ This  completely fixes the parameters $a,b,c$ and $d.$ Moreover, since we know that $[r,r^2,r^3,r^4,(r^2-1)^2\operatorname{arctanh}(r)]$ is an ECT in $(0,1)$ we can ensure that these zeros are simple for $f.$ Then we have to prove that $g$ does not have zeros in $(0,1).$ With this aim, it can be seen that
\[
\left(\frac{g(r)}{(r^2-1)^2} \right)'= \frac{P_4(r)}{(r^2-1)^3},
\]
where $P_4$ is a fixed polynomial of degree 4. It can be seen, by computing its Sturm's sequence, that it is positive in $[0,1].$ Since $g(0)=0,$ this shows that $g(r)>0$ in $(0,1)$ and $n_1=0,$ as we wanted to prove.

(b) When $m=2,$ $\rho=\gamma=0.$ Recall that in this case  the maximum number of zeros that the functions $f$ and $g$  separately can have in $(0,1)$ is five. We will look for new configurations not appearing when $m<2.$  Similarly that in the case $m=1,$ we can fix $\beta=1,$ take six different values $r_j, j=1,\ldots,6$ in $(-1,1)$ and impose that six  equations among the twelve ones: $f(r_j)=0, g(r_j)=0, j=1,\ldots,6$ are fulfilled. We remark that at most five of these equations can involve $f$ or $g.$  Then from these six equations we obtain explicitly  the values of $a,b,c,d, \kappa$ and $\alpha.$  They provide all configurations with  $0\le m_1+n_1\le 6$ with $m_1,n_1\le5.$

(c) Case $m=2.$  In this occasion we fix $\gamma=1$ and eight values between $-1$ and $1$ fix the parameters $a,b,c,d,\kappa,\rho,\alpha$ and $\beta.$ By using this approach we obtain all configurations with  $0\le m_1+n_1\le 8$ with $m_1,n_1\le6.$
\end{proof}

Next proposition fully characterizes the number of simultaneous zeros of $M_1$ and $N_1$ when both functions are polynomial.

\begin{proposition}\label{pro:pol} For
each $m\le3,$ set
\begin{align*}
	M_1(r)=a+br+cr^2+dr^3,\quad\mbox{and}\quad
	N_1(r)=c+(b+2d-\kappa+\rho)r+ar^2+(-d+\kappa)r^3,
\end{align*} 
the functions given in  Proposition~\ref{prop_aux_complex} when $\alpha=\beta=\gamma=0.$ Let $m_1\le 3$ and $n_1\le 3$ be, respectively, their number of zeros in $(0,1)$ taking into account their multiplicities. Then the following holds:
\begin{enumerate}
	\item[(i)] When $m=0,$ ($\kappa=\rho=d=0,$ $c=-a$)  then $m_1+n_1\le 1.$
	\item[(ii)] When $m=1,$ ($\kappa=\rho=0$)  then  $m_1+n_1\le 4.$ 
     \item[(iii)]  When $m=2,$ ($\rho=0$)  then  $m_1+n_1\le 4.$
     \item[(iv)] When $m=3,$ then $m_1+n_1\le 5.$ 
\end{enumerate}
Moreover, all values of $m_1$ and $n_1$ satisfying  the above restrictions are attained, except $(m_1,n_1)\in\{(3,0),(0,3)\}$ when $m\in\{1,2\}.$
\end{proposition}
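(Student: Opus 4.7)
The plan is to split, for each of the four cases, an upper-bound argument from a realizability argument.

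For (i), the decisive observation is $N_1(r)=-M_1(-r)$, so zeros of $N_1$ in $(0,1)$ correspond bijectively to zeros of $M_1$ in $(-1,0)$, and $m_1+n_1$ counts zeros of the quadratic $M_1(r)=a+br-ar^2$ in $(-1,1)\setminus\{0\}$. When $a\neq 0$ the product of its two (possibly complex) roots equals $-1$, so at most one of them lies in $(-1,1)$, giving $m_1+n_1\le 1$. The case $a=0$ is trivial, and realizability of $(0,0)$, $(1,0)$ and $(0,1)$ follows from explicit choices of $(a,b)$.

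For (ii)--(iv), I plan to use the structural identities
\[
N_1(0)=c, \qquad N_1(1)=M_1(1)+\rho, \qquad [r^2]\,N_1=a=M_1(0),
\]
where $[r^2]\,N_1$ denotes the coefficient of $r^2$ in $N_1$. Assume without loss of generality $d>0$ and $m_1=3$, so $M_1=d(r-r_1)(r-r_2)(r-r_3)$ with $r_i\in(0,1)$. Then $c=-d(r_1+r_2+r_3)<0$, hence $N_1(0)<0$. For $m\in\{1,2\}$ the assumption $\rho=0$ gives $N_1(1)=d(1-r_1)(1-r_2)(1-r_3)>0$, so the sign change $N_1(0)<0<N_1(1)$ excludes $n_1\in\{0,2\}$ (counted with multiplicity) and leaves $n_1\in\{1,3\}$. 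To rule out $n_1=3$, I would write $N_1=b_3(r-s_1)(r-s_2)(r-s_3)$ with $s_j\in(0,1)$, compare Vieta against the identities above, and obtain
\[
r_1r_2r_3\cdot s_1s_2s_3=(r_1+r_2+r_3)(s_1+s_2+s_3).
\]
AM--GM bounds the right-hand side below by $9\,(r_1r_2r_3\, s_1s_2s_3)^{1/3}$, so $T:=r_1r_2r_3\, s_1s_2s_3$ satisfies $T\ge 27$, contradicting $T<1$. Thus for $m\in\{1,2\}$, $m_1=3$ forces $n_1=1$, which proves $m_1+n_1\le 4$ and rules out $(3,0)$ and (by symmetry) $(0,3)$. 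For $m=3$ only the AM--GM obstruction survives (since $\rho$ can flip the sign of $N_1(1)$), giving $n_1\le 2$ and hence $m_1+n_1\le 5$. Zeros of $N_1$ of higher multiplicity are handled by the analogous Vieta/AM--GM computation with repeated factors.

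For the realizability half I plan to adapt the shooting procedure used in the proof of Theorem~\ref{tmb}: prescribe $m_1$ distinct points in $(0,1)$ as simple zeros of $M_1$ and $n_1$ distinct points in $(0,1)$ as simple zeros of $N_1$, imposing them as linear equations on the free parameters ($(a,b,c,d)$ for $m=1$; $(a,b,c,d,\kappa)$ for $m=2$; $(a,b,c,d,\kappa,\rho)$ for $m=3$), and solve the resulting linear system. The parameter counts $4,5,6$ accommodate the admissible totals $m_1+n_1\le 4,4,5$, so generically the system has solutions. Absence of spurious zeros in $(0,1)$ is then certified using Descartes' rule (Theorem~\ref{descartes}) or a Sturm sequence, with Lemma~\ref{roots_1pf} used to propagate the zero counts under parameter deformations. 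The main obstacle is precisely this verification, which is most delicate at the extremal configurations $(2,2)$ for $m\in\{1,2,3\}$ and $(3,2),(2,3)$ for $m=3$: there both $M_1$ and $N_1$ are close to gaining or losing a zero near the boundary of $(0,1)$, so one must carefully choose the prescribed zeros and the remaining free parameters to guarantee the non-vanishing conditions $M_1(0)M_1(1)N_1(0)N_1(1)\ne 0$ and the absence of additional real zeros in $(0,1)$.
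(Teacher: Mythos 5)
Your upper-bound arguments (i)--(iv) are correct, and for the hardest cases they follow a genuinely different and shorter route than the paper. The paper, after normalizing $d=1$ and writing $M_1=(r-r_1)(r-r_2)(r-r_3)$, treats $m=1$ via concavity of $N_1$ on $[0,1]$ together with $N_1(0)<0<N_1(1)$, treats $m=2$ by a lengthy analysis of $\Delta_r(Q_\kappa)$ as a quartic in $\kappa$ (its own discriminant, the intervals $K_1,K_2,K_3$, Lemma~\ref{roots_1pf} and Descartes), and treats $m=3$ by solving for $\kappa,\rho$ from two prescribed zeros of $N_1$ and exhibiting the third root as explicitly negative. Your key point --- that the constant and $r^2$ coefficients of $N_1$ are $c=-d(r_1+r_2+r_3)$ and $a=-dr_1r_2r_3$, hence independent of $\kappa,\rho$, so that $n_1=3$ would force the Vieta identity $r_1r_2r_3\,s_1s_2s_3=(r_1+r_2+r_3)(s_1+s_2+s_3)$, impossible for $r_i,s_j\in(0,1)$ (by AM--GM, or simply because the right-hand side strictly exceeds the left) --- rules out $(3,3)$ uniformly in $\kappa,\rho$, and combined with the parity forced by $N_1(0)N_1(1)<0$ when $\rho=0$ it gives $n_1=1$ for $m\in\{1,2\}$ and $n_1\le 2$ for $m=3$. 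This replaces the paper's discriminant machinery for $m=2$ and is a real simplification; the only small things to make explicit are that $\kappa-d\neq0$ whenever $n_1=3$ (a polynomial of degree $\le 2$ cannot have three roots), the harmless normalizations ($d>0$ by negating all parameters, the structural symmetry exchanging $M_1$ and $N_1$ with $(\kappa,\rho)\mapsto(\kappa,-\rho)$), and the multiplicity bookkeeping, which you already mention.

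The gap is in the realizability half. For $m=1$ the conditions $M_1(r_i)=0$, $N_1(s_j)=0$ are \emph{homogeneous} linear equations in $(a,b,c,d)$, so when $m_1+n_1=4$ you get a square homogeneous system that for generic prescribed points has only the trivial solution; the claim ``generically the system has solutions'' fails exactly in the extremal case you need, and in particular $(m_1,n_1)=(2,2)$ for $m=1$ cannot be obtained by freely prescribing four zeros. (For $(3,1)$ and $(1,3)$ there is no issue: prescribing the three zeros of $M_1$ uses only three conditions, and your own first half then forces $n_1=1$.) The paper gets around this by fixing the three roots of $M_1$, two in $(0,1)$ and one outside, and checking the induced $N_1$; e.g. roots $1/6,\,1/4,\,21/20$ give $(m_1,n_1)=(2,2)$. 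For $m=2$ (five parameters) and $m=3$ (six parameters) your dimension count is fine, but note that for $m=3$ the paper's explicit formula showing that the third root of $N_1$ is negative simultaneously proves the bound and the attainability of $(3,2)$ with no spurious-zero verification, whereas in your scheme that verification --- which you rightly flag as the delicate step --- is still to be done.
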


\begin{proof}  (i) It is already proved in Theorem~\ref{tmb}.

When $m\ne0,$ the most interesting and difficult case happens if either $m_1=3$ or $n_1=3.$ We will concentrate in the case $m_1=3,$ because the other situation can be reduced to this one.

In particular,  $d$ must be non zero and without loss of generality we can assume that $d=1.$ Since $m_1=3,$ $M_1$ has all its roots $r_1,r_2$ and $r_3$  in $(0,1),$ we obtain that
   \begin{align*}
	M_1(r) &=r^3+cr^2+br+a=(r-r_1)(r-r_2)(r-r_3)\\&=r^3-(r_1+r_2+r_3)r^2+(r_1r_2+r_1r_3+r_2r_3)r-r_1r_2r_3.
\end{align*}
Then,	
	\begin{align*}
	N_1(r)= (\kappa-1) r^3 -r_1r_2r_3 r^2+\big(r_1r_2+r_1r_3+r_2r_3+2-\kappa +\rho \big)r   -(r_1+r_2+r_3).
\end{align*}
Notice that
\begin{align*}
N_1(0)=-(r_1+r_2+r_3)<0,\quad N_1(1)=(1-r_1)(1-r_2)(1-r_3)+\rho.
%N_1''(r)&=6(\kappa-1)r-2r_1r_2r_3.
\end{align*}

Let us prove item~(ii). When  $ r_1,r_2,r_3\in(0,1)$ and $\kappa=\rho=0,$ then $N_1(0)<0,$ $N_1(1)>0$ and 
	$N_1''(r)=-6r-2r_1r_2r_3<0$ for $r\ge0.$ Hence when $m_1=3,$ by Bolzano's Theorem $n_1\ge1$ and by Rolle's Theorem $n_1\le2$ because $N_1''\big|_{[0,1]}<0.$  Moreover, the possibility $n_1=2$ is incompatible with $N_1(0)N_1(1)<0$ and it holds that $(m_1,n_1)=(3,1).$ 
	
	All the other cases satisfying $m_1<3$ or $n_1<3$ and $0\le m_1+n_1\le 4$ can be easily obtained by simple inspection. For instance, by taking $r_1=1/6$, $r_2=1/4$ and $r_3\in\mathbb{R}\setminus(0,1)$ as the roots of $M_1$, we get that  for $r_3= -2,-1/5, 21/20,$ it holds that  $m_1=2$ and the values of $n_1$ are $0,1$ and $2,$ respectively. We omit the other examples.

(iii) By using item~(ii) it is clear that all cases with $0\le m_1+n_1\le4$ and $m_1<3$ and $n_1<3$ do happen. It is also clear that there are examples where  $(m_1,n_1)$ is $(3,1)$ or $(1,3).$  Let us prove that, as in the above case, when $m_1=3$  then $n_1=1.$ In this case the proof is more involved.

As in item~(ii) we assume that  $m_1=3$ and  $s_1,s_2,s_3\in(0,1).$ In this case $\rho=0$ and also happens that $N_1(0)N_1(1)<0,$ because it is independent of $\kappa.$ In particular we know that $n_1\ge1.$ The difference with the above case is that $N_1''(r)=6(\kappa-1)r-2r_1r_2r_3$ and when $\kappa>1$ this function can change sign in $(0,1).$ In any case, when $\kappa\le 1$ we know that $n_1=1$ and the result follows.

To prove that $n_1=1$ when $\kappa>1,$  we will apply Lemma~\ref{roots_1pf}. We fix the values $ r_1,r_2,r_3\in(0,1),$  consider  $\kappa$ as a parameter and introduce the notation 
\[Q_\kappa(r):=N_1(r)= (\kappa-1) r^3 -r_1r_2r_3 r^2+\big(r_1r_2+r_1r_3+r_2r_3+2-\kappa\big)r   -(r_1+r_2+r_3)
.\] Notice that 
\[
Q_\kappa(0)=N_1(0)<0\quad\mbox{and}\quad Q_\kappa(1)=N_1(1)>0.
\]
To apply the lemma we first need to study the zeros of $\Delta_r(Q_\kappa(r)).$ 
Some computations give that  $$\Delta_r(Q_\kappa(r))=4\kappa^4+\eta_3 \kappa^3+ \eta_2 \kappa^2+\eta_1\kappa+\eta_0,$$ where the coefficients $\eta_j=\eta_j(r_1,r_2,r_3)$, $j=0,1,2,3,$ are symmetric polynomials that we skip for the sake of shortness. 
It is well-know that given any real quartic polynomial $P(\kappa)$ such that $\Delta_\kappa (P(\kappa))<0$ it has  two real roots and two complex ones, see~\cite{Rees}.  Some tedious computations give that
$$\Delta_\kappa \big(\Delta_r(Q_\kappa(r))\big)=-256(r_1^2-1)(r_2^2-1)( 
r_3^2-1)(r_1 + r_2 + r_3)(E(r_1,r_2,r_3))^3,$$
with
\begin{align*}
	E(r_1,r_2,r_3)&=-27 (r_2 + r_3) (1 + r_2 r_3)^2 + 
	27 r_1 (-1 + r_2^3 r_3 - 2 r_3^2 + r_2 r_3 (-6 + r_3^2)  \\
	&\quad+ 
	r_2^2 (-2 + r_3^2)) + r_1^3 (27 r_2 r_3 - 27 r_3^2 + r_2^3 r_3^3 + 9 r_2^2 (-3 + 2 r_3^2))\\
	&\quad  + 9 r_1^2 (3 r_2 (-2 + r_3^2) - 3 r_3 (2 + r_3^2) + r_2^3 (-3 + 2 r_3^2) + r_2^2 r_3 (3 + 2 r_3^2)),\\
\end{align*}
which can be seen that is negative for all $(r_1,r_2,r_3)\in(0,1)^3.$ This is so, because by studying the system
\[
\frac{\partial}{\partial r_1}E(r_1,r_2,r_3)=0,\quad \frac{\partial}{\partial r_2}E(r_1,r_2,r_3)=0,\quad \frac{\partial}{\partial r_3}E(r_1,r_2,r_3)=0,
\]
we get that does not have solutions in $(0,1)^3.$ Hence,
 the maximum of the function $E$ on the box $[0,1]^3$ is 0 and it is reached in the boundary at the point $(0,0,0).$ 
Furthermore, it is easy to see that the two real zeros of $\Delta_r(Q_\kappa(r)),$  $\kappa_1$ and $\kappa_2$ satisfy
\[
1<\kappa_1<R< \kappa_2,\quad \mbox{where} \quad R= r_1r_2+r_1r_3+r_2r_3+2,
\]
because  $\Delta_r(Q_\kappa(r))|_{\kappa=1}>0$ and $\Delta_r(Q_\kappa(r))|_{\kappa=R}<0.$

Hence, if we define the three intervals $K_1=(1,\kappa_1),$ $K_2=(\kappa_1,\kappa_2)$ and $K_3=(\kappa_2,\infty),$
 by Lemma~\ref{roots_1pf} the value $n_1$ (that is the number of roots of $Q_\kappa(r)$ in $(0,1)$) when $\kappa\in K_j,$ $j=1,2,3$ does not depend on $\kappa,$ but on $j$ and maybe on the values of $r_1,r_2$ and $r_3.$

Similarly that in the quartic case, it is also well-know that given any real cubic polynomial $P(r)$ it holds that:
\begin{itemize}
	\item If $\Delta_r(P(r))>0$ it has  three simple real roots; and
		\item If $\Delta_r(P(r))<0$ it has one simple real root and two simple complex roots.
\end{itemize}   
Hence, if we take $\kappa\in K_1,$ then  $\Delta_r(Q_\kappa(r))>0$ and  $Q_\kappa$ has three simple real roots. Let us prove that two of them are greater than  $1.$ To ensure that $\kappa\in K_1$ we take $\kappa=1+\varepsilon,$ for $\varepsilon>0,$ small enough. For this value of $\kappa,$ let us prove that $Q_\kappa$ has a positive root that tends to infinite when $\epsilon$ tends to 0. To prove this fact consider the new variable $s=1/r,$ Then, when $\kappa=1+\varepsilon,$
\[
P_\varepsilon(s):=s^3 Q_\kappa(1/s)=  -(r_1+r_2+r_3)s^3 +\big(r_1r_2+r_1r_3+r_2r_3+1-\varepsilon\big)s^2  -r_1r_2r_3 s+\varepsilon. 
\]
By the implicit function Theorem $P_\varepsilon$ has a zero $s(\varepsilon)= \varepsilon/(r_1r_2r_3)+O(\varepsilon^2)$ that tends to zero when $\varepsilon$ tends to zero. This zero gives a positive zero $r(\varepsilon)$ of  $Q_{1+\varepsilon}$ that tends to infinity when $\varepsilon$ goes to zero. Moreover its asymptotic expansion at $\varepsilon=0$ is $r(\varepsilon)\sim r_1r_2r_3/\varepsilon.$ Hence, from the existence of two positive roots of $Q_{1+\varepsilon},$ one in $(0,1)$ and a second one near infinity we deduce the existence of a third one, which moreover it is in $(1,r(\varepsilon)),$ as we wanted to prove.
 
 If we take $\kappa\in K_2,$ then  $\Delta_r(Q_\kappa(r))<0$ and  $Q_\kappa$ has a single real root. Since $Q_\kappa$ has a root in $(0,1)$ then $n_1=1.$
 
  If we take $\kappa\in K_3,$ then again $\Delta_r(Q_\kappa(r))>0$ and  $Q_\kappa$ has three simple real roots. To know the localization of the roots it suffices to consider a value of $\kappa$ big enough. Then the signs of the ordered coefficients of $Q_\kappa$ are $[+,-,-,-]$ and  by Descarte's rule of signs (see Theorem \ref{descartes}) $Q_r$ has exactly one positive root. Hence, as in the previous case  $n_1=1.$

In short, when $\kappa\not\in\{\kappa_1,\kappa_2\}$ it holds that $(m_1,n_1)=(3,1).$ Otherwise, some multiple root of~$N_1$ appears but never in $(0,1).$ 

(iv) We only need to take care of  cases with at least  five zeros.
Let us assume that $m_1=3$ and $n_1\ge2$ and prove that indeed $n_1=2.$ As in the previous case, the values $0<r_1,r_2,r_3<1$ fix $M_1.$ By imposing that $0<s_1,s_2<1$ and $N_1(s_1)=N_1(s_2)=0$ we obtain that
\begin{align*}
	\kappa &= \frac{r_1r_2r_3s_1s_2+ s_1s_2(s_1+s_2)-(r_1+r_2+r_3)}{  s_1s_2(s_1+s_2)},\\
	\rho&=\frac{U(r_1,r_2,r_3,s_1,s_2)}{  s_1s_2(s_1+s_2)},\\
U(r_1,r_2,r_3,s_1,s_2)&= (r_1+r_2+r_3)(s_1^2+s_2^2+s_1s_2-1)\\&\quad-(r_1r_2+r_1r_3+r_2r_3+1)s_1s_2(s_1+s_2) +r_1r_2r_3s_1s_2(s_1s_2+1).
\end{align*} 
Then
\[
N_1(r)=\frac{(r-s_1)(r-s_2)}{  s_1s_2(s_1+s_2)} \big((r_1r_2r_3s_1s_2-r_1-r_2-r_3)r-(s_1+s_2)(r_1+r_2+r_3)\big)
\] 
and the third root of $N_1$ is
\[
r_3= \frac{(s_1+s_2)(r_1+r_2+r_3)}{r_1r_2r_3s_1s_2-r_1-r_2-r_3}<0,
\] 
because $ r_1r_2r_3s_1s_2<r_1$ and so $r_1r_2r_3s_1s_2-r_1-r_2-r_3<-(r_2+r_3)<0.$ Hence, since $r_3\notin(0,1),$ $n_1=2.$
\end{proof}

\section{Proof of Theorem \ref{tmc}}
The next result provides us with the expressions of the Melnikov functions $M_1$ and $N_1$ as well as the maximum number of zeros that these functions separately can have in the PWHS case when $f(z)={\rm i}(z^2-1)/2,$ and $m\ge 3.$
\begin{proposition}\label{prop_aux_hol}
    Let $R^\pm_m$ be a holomorphic polynomial of degree $m$ in \eqref{eq_pert}  when $f(z)={\rm i}(z^2-1)/2,$ and $m\ge 3.$ Then, the Melnikov functions $M_1$ and $N_1$  on -1 and 1 associated to it are:
\begin{align*}
	M_1(r)&=\displaystyle\frac{1}{(r^2-1)^{m-3}}\left[\sum_{n=0}^{2(m-2)}a_nr^n+\alpha r(r^2-1)^{m-3}\operatorname{arctanh}(r)\right],\\  N_1(r)&=\displaystyle\frac{1}{(r^2-1)^{m-3}}\left[\sum_{n=0}^{2(m-2)}b_nr^n+\alpha r(r^2-1)^{m-3}\operatorname{arctanh}(r)\right],
\end{align*}
     where $a_n,$ $b_n$ and $\alpha$ depend of the coefficients $a^\pm_{0,l}$ for $l=\{0,\dots,m\}.$  
     Moreover there are several linear relations among the values~$a_n$ and the values~$b_n$ as can be seen in the proof.
\end{proposition}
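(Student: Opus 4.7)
The plan is to apply Theorem~\ref{tma} directly with the linearizing change $\phi(z)=(1+z)/(1-z)$ of $\dot z={\rm i}(z^2-1)/2$ at $p=-1$. Since $\phi$ has real coefficients, $\phi(\Sigma)\subset\Sigma$ is automatic, so the hypotheses of Theorem~\ref{tma} are satisfied. A direct computation yields
\[
\phi'(\phi^{-1}(w))=\frac{(w+1)^2}{2},\qquad \phi^{-1}(w)=\frac{w-1}{w+1}.
\]
Writing the holomorphic perturbation as $R^\pm_m(z)=\sum_{l=0}^m \overline{a}_{0,l}^\pm\, z^l$ and substituting into Theorem~\ref{tma}, the integrand collapses to
\[
\frac{{\rm i}e^{{\rm i}\theta}}{2}\sum_{l=0}^m a_{0,l}^\pm(\overline{w}-1)^l(\overline{w}+1)^{2-l},\qquad w=re^{{\rm i}\theta},
\]
which separates the computation into one elementary integral per $l\in\{0,1,\dots,m\}$.

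For $l\le 2$ each summand is a trigonometric polynomial in $\theta$ and contributes only to the polynomial part of $M_1$. For $l\ge 3$ the factor $(\overline{w}+1)^{2-l}$ carries a pole at $\overline{w}=-1$. I would handle these via the substitution $u=re^{-{\rm i}\theta}$ (so that $d\theta={\rm i}\,du/u$ and $e^{{\rm i}\theta}=r/u$), converting the integrand into a contour integral
\[
{\rm i}r\int_{\gamma^\pm_r}\frac{(u-1)^l}{u^2(u+1)^{l-2}}\,du
\]
along the lower or upper semicircle of radius $r$. A partial fraction decomposition produces terms $C_k^{(l)}/(u+1)^k$ for $k=1,\dots,l-2$ together with $1/u$ and $1/u^2$ terms; all but the $C_1^{(l)}/(u+1)$ piece integrate to rational functions of $r$ with denominator dividing $(r^2-1)^{m-3}$. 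The $C_1^{(l)}/(u+1)$ piece yields $\log(u+1)$ evaluated from $r$ to $-r$, i.e. $\log((1-r)/(1+r))=-2\operatorname{arctanh}(r)$.

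Taking imaginary parts and forming $M_1=M_1^+-M_1^-$ cancels the rational logarithmic ambiguities coming from the branch at $u=-1$ and keeps exactly one transcendental contribution $\alpha r\operatorname{arctanh}(r)$, with $\alpha$ a real linear combination of the residues $C_1^{(l)}$ for $l=3,\dots,m$. Clearing denominators by $(r^2-1)^{m-3}$ leaves a polynomial part of degree at most $2(m-2)$ and produces the stated form of $M_1$. For $N_1$, I would exploit the involution $z\mapsto -z$, which conjugates the center at $+1$ to the one at $-1$, swaps $\Sigma^+$ and $\Sigma^-$, and transforms the coefficients via $a_{0,l}^\pm\mapsto(-1)^l a_{0,l}^\mp$. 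Applying this substitution to the formula for $M_1$ yields $N_1$ with the same functional shape, and by symmetry of the residue computation at $u=-1$ it produces the \emph{same} scalar $\alpha$. The linear relations between the $a_n$ and the $b_n$ are then read off directly from this explicit substitution, with several relations forced because the total parameter dimension of the $a_{0,l}^\pm$ is strictly smaller than twice the output dimension of a generic pair $(a_n,b_n,\alpha)$.

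The main obstacle is verifying that the transcendental content reduces to a \emph{single} multiple of $r\operatorname{arctanh}(r)$ and that the \emph{same} constant $\alpha$ governs both $M_1$ and $N_1$. For small $m$ this is transparent (and is carried out explicitly in Appendix~\ref{app}), but for the general case $m\ge 3$ it requires a clean handling of the $C_1^{(l)}$-residues and of their image under $z\mapsto -z$; the underlying reason this works is that both halfspaces see the same pole at $u=-1$, and the residue computation is symmetric under the involution.
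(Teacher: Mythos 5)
Your route is essentially the paper's: apply Theorem~\ref{tma} with $\phi(z)=(1+z)/(1-z)$, observe that holomorphy kills every term with $k\ge 1$ so only the integrals $I^\pm_{0,l}$ survive, evaluate them $l$ by $l$, and deduce $N_1$ from $M_1$ through $z\mapsto -z$, i.e. $a^\pm_{0,l}\mapsto(-1)^l a^\mp_{0,l}$ (the paper's Proposition~\ref{N1}). Your contour/partial-fraction evaluation of the $l\ge 3$ integrals is a perfectly good way of doing what the paper calls ``straightforward calculations'', and it does yield the rational part with denominator dividing $(r^2-1)^{l-3}$ and the $r\operatorname{arctanh}(r)$ term coming from the $C_1^{(l)}/(u+1)$ piece, since $\log(u+1)$ evaluated from $r$ to $-r$ gives $-2\operatorname{arctanh}(r)$ with no branch issue (the pole $u=-1$ lies outside the circle $|u|=r<1$, so there are no ``ambiguities'' to cancel). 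Two bookkeeping slips: with $u=re^{-{\rm i}\theta}$ one gets ${\rm i}e^{{\rm i}\theta}d\theta=-\frac{r}{u^2}\,du$, so the prefactor is $-r$ (times $\tfrac12 a^\pm_{0,l}$), not ${\rm i}r$; and the $1/u$ piece does \emph{not} integrate to a rational function of $r$ — along the lower (resp.\ upper) semicircle it contributes $\mp{\rm i}\pi$ times its coefficient through the branch of $\log u$, and this is exactly the origin of the $\pm(l-1)\pi\operatorname{Re}(a^\pm_{0,l})\,r$ terms in the paper's formula for $I^\pm_{0,l}$. These slips do not change the claimed structure, since those contributions land in the polynomial part.

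The genuine gap is the assertion, which is part of the statement, that the \emph{same} $\alpha$ multiplies $r(r^2-1)^{m-3}\operatorname{arctanh}(r)$ in $M_1$ and in $N_1$; you flag it as ``the main obstacle'' but do not resolve it, and ``both halfspaces see the same pole'' plus a vague symmetry appeal is not enough. Concretely, your computation gives $\alpha=-\sum_{l\ge3}C_1^{(l)}\bigl(\operatorname{Im}(a^+_{0,l})-\operatorname{Im}(a^-_{0,l})\bigr)$, and the substitution $a^\pm_{0,l}\mapsto(-1)^l a^\mp_{0,l}$ turns this into $\sum_{l\ge3}(-1)^l C_1^{(l)}\bigl(\operatorname{Im}(a^+_{0,l})-\operatorname{Im}(a^-_{0,l})\bigr)$; equality for all coefficient choices therefore forces $C_1^{(l)}=0$ for every even $l$. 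This vanishing is precisely what is encoded in the paper's $\eta(l)=2(1-l)\bigl(1-(-1)^l\bigr)$, and it must be proved, not assumed. It can be done cleanly inside your framework: the involution $u\mapsto 1/u$ (which is what $z\mapsto -z$ induces on $u=\overline{\phi(z)}$) fixes $u=-1$ and pulls back the differential $\frac{(u-1)^l}{u^2(u+1)^{l-2}}\,du$ to $(-1)^{l+1}$ times itself, so its residue at $u=-1$ satisfies $C_1^{(l)}=(-1)^{l+1}C_1^{(l)}$ and vanishes for even $l$. With that lemma supplied (and the $\pi r$ bookkeeping corrected), your argument coincides with the paper's proof.
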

\begin{proof} 

Since the functions $R_m^\pm$ are holomorphic, then  $\overline{a}_{k,l}=0$ for all $1\leq k\leq l$ and $0\leq l\leq m$. Thus, from the formula~\eqref{rec_for} we get that
\begin{equation}\label{rec_for_hol}
	M_1^\pm(r)=\sum_{l=0}^m\left[\sum_{k=0}^l I_{k,l}^\pm(r)\right]=\sum_{l=0}^m I_{0,l}^\pm(r).
\end{equation}
The expressions of $I_{0,l}^\pm$ when $l\le 3$ are already detailed in the proof of Proposition~\ref{M1} in the general situation and can be easily particularized to the holomorphic case. Straightforward calculations allows us to get that for $l\ge3,$
 \[I^\pm_{0,l}(r)=-\operatorname{Im}(a^\pm_{0,l})\left(\frac{ P_{2(l-2)}(r)}{(r^2-1)^{l-3}}+\eta(l)r\operatorname{arctanh}(r)\right)\pm(-1)^{l}\operatorname{Re}(a^\pm_{0,l})(l-1)\pi r,\] 
where $P_{2(l-2)}$ is a polynomial function of degree $2(l-1),$ with rational coefficients, and $\eta(l)= 2(1-l)\big(1-(-1)^l\big).$ Then,
  \begin{align*}
M_1^\pm(r)&=\displaystyle\sum_{l=0}^2 I_{0,l}^\pm(r)+\displaystyle\sum_{l=3}^m I_{0,l}^\pm(r)
= -\displaystyle\sum_{l=0}^2\operatorname{Im}(a_{0l}^\pm)((-1)^{l+1}+r^2)\pm\displaystyle\sum_{l=0}^m(-1)^{l}\operatorname{Re}(a^\pm_{0,l})(l-1)\pi r\\
&\quad-\displaystyle\sum_{l=3}^m\operatorname{Im}(a^\pm_{0,l})\left(\frac{ P_{2(l-2)}(r)}{(r^2-1)^{l-3}}+\eta(l)r\operatorname{arctanh}(r)\right) \\
&= \dfrac{1}{(r^2-1)^{m-3}}\left(-\displaystyle\sum_{l=0}^2\operatorname{Im}(a_{0l}^\pm)((-1)^{l+1}+r^2)(r^2-1)^{m-3}\right.\\
&\quad\qquad\qquad\qquad \pm\displaystyle\sum_{l=0}^m(-1)^{l}\operatorname{Re}(a^\pm_{0,l})(l-1)\pi r(r^2-1)^{m-3}\\
&\quad\qquad\qquad\qquad \left.-\displaystyle\sum_{l=3}^m\operatorname{Im}(a^\pm_{0,l})\left(P_{2(l-2)}(r)(r^2-1)^{m-l}+\eta(l)r(r^2-1)^{m-3}\operatorname{arctanh}(r)\right)\right).
  \end{align*}
 Thus, 
  \begin{align*}
 M_1(r)&= M_1^+(r)- M_1^-(r)=\displaystyle\frac{1}{(r^2-1)^{m-3}}\left[\sum_{n=0}^{2(m-2)}a_nr^n+\alpha r(r^2-1)^{m-3}\operatorname{arctanh}(r)\right],
  \end{align*}
 where $\alpha=-\sum_{l=3}^{m}(\operatorname{Im}(a^+_{0,l})-\operatorname{Im}(a^-_{0,l}))\eta(l)$ and $a_n$ depends of the coefficients $a^\pm_{0,l}$ for $l=\{0,\dots,m\}$.

  To obtain the expression of $N_1$ of the statement  in terms of the coefficients of $M_1$, it is enough to use Proposition~\ref{N1} of Appendix~\ref{app}. 
 \end{proof}

\begin{proof}[Proof Theorem \ref{tmc}]

(a) In the case $m=0$ the  PWCS is indeed holomorphic and the proof is the same as that given in Theorem~\ref{tmb}(a).
   
From Proposition \ref{prop_aux_hol}, when $m\in\{1,2\},$ the bifurcation function of PWHS   \eqref{eq_pert} associated to $z=-1$ and $z=1$ are given respectively by
   $M_1(r)=a+br+cr^2$
   and 
   $N_1(r)=c+br+ar^2= r^2\big(M_1(1/r)\big)$
where $a,b,c$ are arbitrary real numbers. Hence if $r=r^*$ is root of $M_1$ then $r=1/r^*$ is a root of $N_1$ and vice versa because $M_1$ and $N_1$ are reciprocal polynomials. Hence the only possible configurations are $[[i,j]]$ with $i+j\le 2$ and $i,j\le2$ and all them are realizable.

(b) When $m=3,$ according Proposition \ref{prop_aux_hol}, the bifurcation functions of the PWHS \eqref{eq_pert} associated to $z=-1$ and $z=1$ are given respectively by
 \begin{align*}M_1(r)&=a+b r+cr^2+\alpha r\operatorname{arctanh}(r),\\
   N_1(r)&=c+(b-\kappa)r+ar^2+\alpha r\operatorname{arctanh}(r),\end{align*}
where $a,b,c,\alpha$ and $\kappa$ are arbitrary real coefficients, which depend of the real and imaginary parts of  $a^\pm_{0,l}$, for all $l=0,1,2,3$. Even more, the maximum number of zeros that these functions separately can have is 3, because it can be seen that the functions $[1,r,r^2,r\operatorname{arctanh}(r)]$ form an ECT-system in $(0,1).$ Indeed this property also follows from the computations done in next item (c) by taking $m=3.$  By using the same tools that in the proof of item (b) of Theorem~\ref{tmb} we obtain that all configurations $[[i,j]]$ with $i,j\le3,$ and $i+j\le4,$ are realizable.

(c)  By Proposition~\ref{prop_aux_hol}, the first order averaged functions $M_1$ and $N_1,$ multiplied by $(r^2-1)^{m-3},$ belong to the vectorial space $\mathcal{G}$ generated by the ordered set of functions 
$$\mathcal{G}=[1,r,r^2,\ldots, r^{2(m-2)}, r(r^2-1)^{m-3}\operatorname{arctanh}(r)].$$ Let us prove that they form an ECT-system on $(0,1).$ Their Wronskians, defined in Lemma~\ref{wrons}, are
$W_j(r)=\prod_{k=0}^{j}k!\neq 0$  for all $j\in\{0,1,2,\dots,2(m-2)\}$ and $$W_{2(m-2)+1}(r)=\frac{(-1)^m\xi_m r}{(r^2-1)^{m}}\neq 0,$$ at $(0,1),$ where $\xi_m$ is an increasing sequence of positive real numbers.  Then, according to this lemma,  $\mathcal{G}$ is an ECT-system formed $2m-4$ elements. Then, by Theorem~\ref{teo_wrons},  $2m-3$ is the maximum
number of roots in $(0,1)$ for any element of $\operatorname{Span}(\mathcal{G}),$ taking into account their multiplicities, as we wanted to prove. Notice that at this point, to prove that there are values of $a^\pm_{0,l}$ for $l=\{0,\dots,m\},$ for which the corresponding piecewise holomorphic system has $2m-3$ nested limit cycles surrounding $z=-1$ it would suffice to show that there is a choice of these parameters such that  $a_n$, and $\alpha$, $n\in\{0,\dots,2(m-2)+1\}$ can take arbitrary values. 
\end{proof}

\section{Appendix}\label{app}

This appendix is devoted to find the explicit expressions of the first order averaged functions  $M_1$ and $N_1$ for PWCS~\eqref{eq_pert} when $f(z)={\rm i}(z^2-1)/2$ when $m\le3.$
We will start by doing a detailed study of $M_1$ around $z=-1$. The analysis of $N_1$ around $z=1$ will be deduced from the previous one.

\begin{proposition}\label{M1} For system~\eqref{eq_pert}  when $f(z)={\rm i}(z^2-1)/2$ and $m\le3,$ it holds that
	\begin{align*}M_1(r)&=\frac{1}{r}\Big(ar+br^2+cr^3+dr^4+\alpha(r^2-1)^2\operatorname{arctanh}(r)\\&\qquad+\beta(-1+r^4)\operatorname{arctanh}(r)+\gamma r^2\operatorname{arctanh}(r)\Big),\end{align*} where the variables $a,b,c,d,\alpha,\beta$ and $\gamma$ can take any real value for $m=3$. When $m<3$ only appear the following restrictions: $\gamma=0,$ when $m=2;$ $\gamma=\beta=0$ when $m=1;$ and $\gamma=\beta=\alpha=0$ and $c=-a$ when $m=0.$ More specifically, the values of these constants  are given in Remark~\ref{re:M1}.
\end{proposition}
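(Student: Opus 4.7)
The plan is to apply Theorem~\ref{tma} with the explicit linearizing change $\phi(z)=(1+z)/(1-z)$ of $\dot z={\rm i}(z^2-1)/2$ at $z=-1$, and then to evaluate the resulting integrals monomial by monomial in $R_m^\pm$. A direct computation gives $\phi^{-1}(w)=(w-1)/(w+1)$ and $\phi'(\phi^{-1}(w))=(w+1)^2/2$, and since $\phi$ sends the real axis to itself the hypothesis $\phi(\Sigma)\subset\Sigma$ of Theorem~\ref{tma} is satisfied; moreover $f'(-1)=-{\rm i}$ is consistent with the normal form $\dot w=-{\rm i}w$.

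First I would decompose $R_m^\pm(z,\overline{z})=\sum_{l=0}^m\sum_{k=0}^l \overline{a}_{k,l}^\pm z^{l-k}\overline{z}^k$ and, using linearity of $\operatorname{Im}$ in Theorem~\ref{tma}, write
$$M_1^\pm(r)=\sum_{l=0}^m\sum_{k=0}^l I_{k,l}^\pm(r),$$
where $I_{k,l}^\pm(r)$ is the contribution of the single monomial $\overline{a}_{k,l}^\pm z^{l-k}\overline{z}^k$. This reduces the proof, for $m\le 3$, to ten explicit one-variable integrals whose integrands are rational functions of $e^{{\rm i}\theta}$ on a half-circle, regular there since the only singularities of $\phi^{-1}(re^{{\rm i}\theta})$ are at $re^{{\rm i}\theta}=-1$ and $0<r<1$.

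Next I would evaluate these integrals. Substituting $u=e^{{\rm i}\theta}$ and performing partial fractions in $u$, each $I_{k,l}^\pm$ splits into a polynomial part in $r$ of degree at most $4$ for $l\le 3$, together with an additional $\pi r$ coming from the arc length of the half-circle, plus a logarithmic part of the form $\log((1+r)/(1-r))=2\operatorname{arctanh}(r)$ multiplied by a polynomial in $r$ whose degree is bounded by $l$. A systematic bookkeeping by degree $l$ then shows that $l=0$ produces only $r$ and $r^3$ with coefficients forced to satisfy $c=-a$; that $l=1$ additionally opens the transcendental block $(r^2-1)^2\operatorname{arctanh}(r)$; that $l=2$ additionally opens $(r^4-1)\operatorname{arctanh}(r)$; and that $l=3$ is the first level producing $r^2\operatorname{arctanh}(r)$. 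Taking the difference $M_1=M_1^+-M_1^-$ and collecting yields the claimed expression, the explicit real/imaginary linear combinations of the $a_{k,l}^\pm$ being recorded in Remark~\ref{re:M1}.

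The main obstacle is this last step. While each individual integral is elementary, identifying the transcendental part with only the three basis functions $(r^2-1)^2\operatorname{arctanh}(r)$, $(r^4-1)\operatorname{arctanh}(r)$ and $r^2\operatorname{arctanh}(r)$ requires cancellation of higher-order logarithmic contributions coming from the repeated poles at $u=\pm 1/r$, and careful tracking of which $\operatorname{Re}/\operatorname{Im}$ combinations of $a_{k,l}^\pm$ survive after the subtraction $M_1^+-M_1^-$. This is tedious but mechanical for $m\le 3$ and can be done either by hand, aided by residue calculus on $|u|=1$, or by symbolic computation; the restrictions $\gamma=0$ for $m=2$, $\gamma=\beta=0$ for $m=1$ and $\gamma=\beta=\alpha=0,\,c=-a$ for $m=0$ then read off directly from the chart of which basis function first appears at each degree $l$.
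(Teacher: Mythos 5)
Your proposal follows essentially the same route as the paper: apply Theorem~\ref{tma} with the explicit linearization $\phi(z)=(1+z)/(1-z)$, split $M_1^{\pm}$ into the monomial contributions $I_{k,l}^{\pm}$, evaluate the ten elementary integrals for $m\le 3$, and collect terms into the polynomial and $\operatorname{arctanh}$ blocks (the paper integrates directly in $\theta$ rather than via $u=e^{{\rm i}\theta}$ and partial fractions, which is immaterial). The only slip is your claim that the $\operatorname{arctanh}$-multiplier has degree bounded by $l$ --- already at $l=1$ it is $(r^2-1)^2/r$ --- but this does not affect the block structure you correctly identify, which matches the paper's computations and Remark~\ref{re:M1}.
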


\begin{proof}  To employ Theorem \ref{tma} to \eqref{eq_pert} at $z=-1$ we must first linearize $\dot{z}={\rm i}(z^2-1)/2$. It is easy to verify that if
	\begin{equation}\label{phi_features}
		\phi(z)=\frac{1+z}{1-z},\quad \phi'(z)=\frac{2}{(z-1)^2}\quad \text{and}\quad\phi^{-1}(w)=\frac{w-1}{w+1},
	\end{equation}
	and by taking $w=\phi(z)$ the differential equation writes as $\dot w= iw.$
	See the behaviour of the conformal map $\phi$ in Figure~\ref{fig_conf_map}.
	\begin{figure}[h]
		\begin{overpic}[scale=0.5]{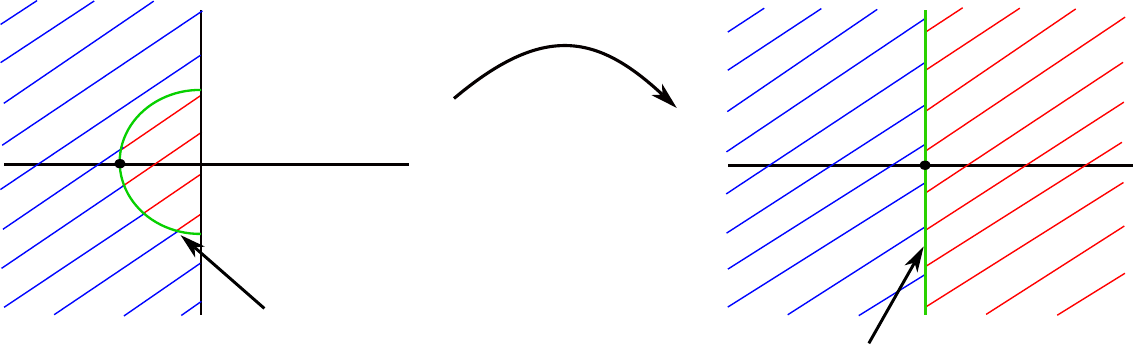}
			\put(102,15){$\phi(\Sigma)$}
			\put(38,15){$\Sigma$}
			\put(24,0){$\mathbb{S}^1$}
			\put(74,-3){$\phi(\mathbb{S}^1)$}
			\put(49,27){$\phi$}
		\end{overpic}
		\caption{\footnotesize{Conformal map $\phi(z)=\frac{1+z}{1-z}$.}}
		\label{fig_conf_map}
	\end{figure}

	In addition, from \eqref{phi_features}, we have that
	$$\phi'\left(\phi^{-1}(w)\right)=\frac{1}{2}(w+1)^2,\qquad\overline{\phi^{-1}(w)}=\frac{\overline{w}-1}{\overline{w}+1},$$	
	and	$\phi(\Sigma)=\Sigma$ and $\phi(\mathbb{S}^1)=\{(0,y):y\in\mathbb{R}\}$.
	
	From Theorem \ref{tma} with $R^\pm(z,\overline{z})=R_m^\pm(z,\overline{z})$ given in \eqref{tom} and $f(z)={\rm i}(z^2-1)/2$ we get that  
	\begin{equation}\label{rec_for}
		M_1(r)=	M_1^+(r)-	M_1^-(r),\quad\mbox{where}\quad	M_1^\pm(r)=\sum_{l=0}^m\Big(\sum_{k=0}^lI_{k,l}^\pm(r)\Big),
	\end{equation}
	and
	$$I^\pm_{k,l}(r)=-\operatorname{Im}\left(a_{k,l}^\pm\displaystyle\int_{0}^{\pm\pi}\overline{\phi'(\phi^{-1}(re^{{\rm i}\theta}))}\left(\phi^{-1}(re^{{\rm i}\theta})\right)^k\left(\overline{\phi^{-1}(re^{{\rm i}\theta})}\right)^{l-k}{\rm i}e^{{\rm i}\theta}d\theta\right).$$
	Thus, using \eqref{rec_for} we get
	\begin{align*}
		I^\pm_{k,l}(r)&=-\operatorname{Im}\left(a_{k,l}^\pm\displaystyle\int_{0}^{\pm\pi}\frac{1}{2}(re^{-{\rm i}\theta}+1)^2\left(\frac{re^{{\rm i}\theta}-1}{re^{{\rm i}\theta}+1}\right)^k\left(\frac{re^{-{\rm i}\theta}-1}{re^{-{\rm i}\theta}+1}\right)^{l-k}{\rm i}e^{-{\rm i}\theta}d\theta\right)\vspace{.3cm}\\
		&=-\dfrac{1}{2}\operatorname{Im}\left(a_{k,l}^\pm\displaystyle\int_{0}^{\pm\pi}\dfrac{(re^{-{\rm i}\theta}+1)^{k-l+2}(re^{{\rm i}\theta}-1)^k(re^{-{\rm i}\theta}-1)^{l-k}{\rm i}e^{{\rm i}\theta}}{(re^{{\rm i}\theta}+1)^k}d\theta\right).
	\end{align*}	
	To arrive to the final expression of $M_1$ we have to compute each of the functions $I^\pm_{k,l}.$ When $l=0,$
	\begin{align*}
		I^\pm_{0,0}(r)&= -\dfrac{1}{2}\operatorname{Im}\left(a_{0,0}^\pm\displaystyle\int_0^{\pm\pi}(re^{-{\rm i}\theta}+1)^2 {\rm i}e^{{\rm i}\theta}d\theta\right)\vspace{0.3cm}\\
		&= -\operatorname{Im}\left(a_{0,0}^\pm\right)(r^2-1)\mp\pi \operatorname{Re}\left(a_{0,0}^\pm\right)r.
	\end{align*}
	For $l=1$ we obtain,
	\begin{align*}
		I^\pm_{0,1}(r)&= -\dfrac{1}{2}\operatorname{Im}\left(a_{0,1}^\pm\displaystyle\int_0^{\pm\pi}(re^{-{\rm i}\theta}+1)(re^{-{\rm i}\theta}-1) {\rm i}e^{{\rm i}\theta}d\theta\right)= -\operatorname{Im}\left(a_{0,1}^\pm\right)(1+r^2),\vspace{0.3cm}\\
		I^\pm_{1,1}(r)&= -\dfrac{1}{2}\operatorname{Im}\left(a_{1,1}^\pm\displaystyle\int_0^{\pm\pi}\frac{(re^{-{\rm i}\theta}+1)^2(re^{{\rm i}\theta}-1) {\rm i}e^{{\rm i}\theta}}{re^{{\rm i}\theta}+1}d\theta\right)\vspace{0.3cm}\\
		&= -\dfrac{\operatorname{Im}\left(a_{1,1}^\pm\right)}{r}\Big(-r(1+r^2)+2(r^2-1)^2\operatorname{arctanh}(r)\Big)\mp\pi\operatorname{Re}\left(a_{1,1}^\pm\right)r(r^2-1).\\
	\end{align*}
	When $l=2$ we arrive to 
	\begin{align*}
		I^\pm_{0,2}(r)&= -\dfrac{1}{2}\operatorname{Im}\left(a_{0,2}^\pm\displaystyle\int_0^{\pm\pi}(re^{-{\rm i}\theta}-1)^2 {\rm i}e^{{\rm i}\theta}d\theta\right)\vspace{0.3cm}\\
		&= -\operatorname{Im}\left(a_{0,2}^\pm\right)(r^2-1)\pm\pi\operatorname{Re}\left(a_{0,2}^\pm\right) r,\vspace{0.3cm}\\
		% \end{array} $$
		% $$ \begin{array}{rcl}
		I^\pm_{1,2}(r)&= -\dfrac{1}{2}\operatorname{Im}\left(a_{1,2}^\pm\displaystyle\int_0^{\pm\pi}\frac{(re^{-{\rm i}\theta}+1)(re^{{\rm i}\theta}-1)(re^{-{\rm i}\theta}-1) {\rm i}e^{{\rm i}\theta}}{re^{{\rm i}\theta}+1}d\theta\right)\vspace{0.3cm}\\
		&= -\dfrac{\operatorname{Im}\left(a_{1,2}^\pm\right)}{r}\Big(r-r^3+2(-1+r^4)\operatorname{arctanh}(r)\Big)\mp\pi\operatorname{Re}\left(a_{1,2}^\pm\right)r^3,\\
		I^\pm_{2,2}(r)&= -\dfrac{1}{2}\operatorname{Im}\left(a_{2,2}^\pm\displaystyle\int_0^{\pm\pi}\frac{(re^{-{\rm i}\theta}+1)^2(re^{{\rm i}\theta}-1)^2 {\rm i}e^{{\rm i}\theta}}{(re^{{\rm i}\theta}+1)^2}d\theta\right)\vspace{0.3cm}\\
		&= -\dfrac{\operatorname{Im}\left(a_{2,2}^\pm\right)}{r}\Big(5r(r^2-1)-4(-1+r^4)\operatorname{arctanh}(r)\Big)\mp\pi\operatorname{Re}\left(a_{2,2}^\pm\right)r(1-2r^2).\\
	\end{align*}
	Finally, for  $l=3,$
	\begin{align*}
		I^\pm_{0,3}(r)&= -\dfrac{1}{2}\operatorname{Im}\left(a_{0,3}^\pm\displaystyle\int_0^{\pm\pi}\frac{(re^{-{\rm i}\theta}-1)^3}{re^{-{\rm i}\theta}+1} {\rm i}e^{{\rm i}\theta}d\theta\right)\vspace{0.3cm}\\
		&= -\operatorname{Im}(a_{0,3}^\pm)\Big((1+r^2)-8r\operatorname{arctanh}(r)\Big)\mp2\pi \operatorname{Re}(a_{0,3}^\pm) r,\vspace{0.3cm}\\
		I^\pm_{1,3}(r)&= -\dfrac{1}{2}\operatorname{Im}\left(a_{1,3}^\pm\displaystyle\int_0^{\pm\pi}\frac{(re^{{\rm i}\theta}-1)(re^{-{\rm i}\theta}-1)^2 {\rm i}e^{{\rm i}\theta}}{re^{{\rm i}\theta}+1}d\theta\right)\vspace{0.3cm}\\
		&= -\dfrac{\operatorname{Im}\left(a_{1,3}^\pm\right)}{r}\Big(-r(1+r^2)+2(1+r^2)^2\operatorname{arctanh}(r)\Big)\mp \pi\operatorname{Re}\left(a_{1,3}^\pm\right) r^2(1+r^2),\vspace{0.3cm}\\
		I^\pm_{2,3}(r)&= -\dfrac{1}{2}\operatorname{Im}\left(a_{2,3}^\pm\displaystyle\int_0^{\pm\pi}\frac{(re^{-{\rm i}\theta}+1)(re^{{\rm i}\theta}-1)^2(re^{-{\rm i}\theta}-1) {\rm i}e^{{\rm i}\theta}}{(re^{{\rm i}\theta}+1)^2}d\theta\right)\vspace{0.3cm}\\
		&= -\dfrac{\operatorname{Im}\left(a_{2,3}^\pm\right)}{r}\Big(5r(1+r^2)-4(1+r^4)\operatorname{arctanh}(r)\Big)\pm2\pi\operatorname{Re}\left(a_{2,3}^\pm\right) r^4,\\    
		I^\pm_{3,3}(r)&= -\dfrac{1}{2}\operatorname{Im}\left(a_{3,3}^\pm\displaystyle\int_0^{\pm\pi}\frac{(re^{-{\rm i}\theta}+1)^2(re^{{\rm i}\theta}-1)^3 {\rm i}e^{{\rm i}\theta}}{(re^{{\rm i}\theta}+1)^3}d\theta\right)\vspace{0.3cm}\\
		&= -\dfrac{\operatorname{Im}\left(a_{3,3}^\pm\right)}{r}\Big(-5r(1+r^2)+(6-4r^2+6r^4)\operatorname{arctanh}(r)\Big)\mp\pi\operatorname{Re}\left(a_{3,3}^\pm\right) r^2(-1+3r^2).\\
	\end{align*}
	By adding all the above expressions we obtain  $M_1^\pm$ and then the final expression of $M_1.$
\end{proof}

\begin{remark}\label{re:M1} Values of the parameters appearing in the expression of $M_1$ given in Proposition~\ref{M1} when $m\le3.$
	\begin{align*}
		a&=\operatorname{Im}(a_{ 0,0}^+)-\operatorname{Im}(a_{ 0,0}^-)-\operatorname{Im}(a_{ 0,1}^+)+\operatorname{Im}(a_{ 0,1}^-)+\operatorname{Im}(a_{11}^+)-\operatorname{Im}(a_{11}^-)+\operatorname{Im}(a_{0,2}^+)-\operatorname{Im}(a_{0,2}^-)\\&\quad-\operatorname{Im}(a_{1,2}^+)+\operatorname{Im}(a_{1,2}^-)+5\operatorname{Im}(a_{2,2}^+)-5\operatorname{Im}(a_{2,2}^-)-\operatorname{Im}(a_{0,3}^+)+\operatorname{Im}(a_{0,3}^-)+\operatorname{Im}(a_{1,3}^+)\\&\quad-\operatorname{Im}(a_{1,3}^-)-5\operatorname{Im}(a_{2,3}^+)+5\operatorname{Im}(a_{2,3}^-)+5\operatorname{Im}(a_{3,3}^+)-5\operatorname{Im}(a_{3,3}^-),\\     
		b&=-\pi\operatorname{Re}(a_{ 0,0}^+)-\pi\operatorname{Re}(a_{ 0,0}^-)+\pi\operatorname{Re}(a_{11}^+)+\pi\operatorname{Re}(a_{11}^-)+\pi\operatorname{Re}(a_{0,2}^-)+\pi\operatorname{Re}(a_{0,2}^+)-\pi\operatorname{Re}(a_{2,2}^+)\\&\quad-\pi\operatorname{Re}(a_{2,2}^-)-2\pi\operatorname{Re}(a_{0,3}^-)-2\pi\operatorname{Re}(a_{0,3}^+)-\pi\operatorname{Re}(a_{1,3}^+)-\pi\operatorname{Re}(a_{1,3}^-)\\&\quad+\pi\operatorname{Re}(a_{3,3}^+)+\pi\operatorname{Re}(a_{3,3}^-),\\
		c&=-\operatorname{Im}(a_{ 0,0}^+)+\operatorname{Im}(a_{ 0,0}^-)-\operatorname{Im}(a_{ 0,1}^+)+\operatorname{Im}(a_{ 0,1}^-)+\operatorname{Im}(a_{11}^+)-\operatorname{Im}(a_{11}^-)-\operatorname{Im}(a_{0,2}^+)\\&\quad+\operatorname{Im}(a_{0,2}^-)+\operatorname{Im}(a_{1,2}^+)-\operatorname{Im}(a_{1,2}^-)-5\operatorname{Im}(a_{2,2}^+)+5\operatorname{Im}(a_{2,2}^-)+\operatorname{Im}(a_{1,3}^+)-\operatorname{Im}(a_{1,3}^-)\\&\quad-5\operatorname{Im}(a_{2,3}^+)+5\operatorname{Im}(a_{2,3}^-)+5\operatorname{Im}(a_{3,3}^+)-5\operatorname{Im}(a_{3,3}^-),\\
		d&=-\pi\operatorname{Re}(a_{11}^+)-\pi\operatorname{Re}(a_{11}^-)-\pi\operatorname{Re}(a_{1,2}^+)-\pi\operatorname{Re}(a_{1,2}^-)+2\pi\operatorname{Re}(a_{2,2}^+)+2\pi\operatorname{Re}(a_{2,2}^-)\\&\quad-\pi\operatorname{Re}(a_{1,3}^+)-\pi\operatorname{Re}(a_{1,3}^-)+2\pi\operatorname{Re}(a_{2,3}^+)+2\pi\operatorname{Re}(a_{2,3}^-)-3\pi\operatorname{Re}(a_{3,3}^+)-3\pi\operatorname{Re}(a_{3,3}^-),\\
		\alpha&=-2\operatorname{Im}(a_{11}^+)+2\operatorname{Im}(a_{11}^-)-2\operatorname{Im}(a_{1,3}^+)+2\operatorname{Im}(a_{1,3}^-)+4\operatorname{Im}(a_{2,3}^+)-4\operatorname{Im}(a_{2,3}^-)\\&\quad-6\operatorname{Im}(a_{3,3}^+)+6\operatorname{Im}(a_{3,3}^-),\\
		\beta&=-2\operatorname{Im}(a_{1,2}^+)+2\operatorname{Im}(a_{1,2}^-)+4\operatorname{Im}(a_{2,2}^+)-4\operatorname{Im}(a_{2,2}^-),\\
		\gamma&=-\operatorname{Im}(a_{0,3}^+)+\operatorname{Im}(a_{0,3}^-)+\operatorname{Im}(a_{1,3}^+)-\operatorname{Im}(a_{1,3}^-)-\operatorname{Im}(a_{2,3}^+)+\operatorname{Im}(a_{2,3}^-)+\operatorname{Im}(a_{3,3}^+)-\operatorname{Im}(a_{3,3}^-).
	\end{align*}
\end{remark}

In what follows, we establish a connection between the coefficients of the bifurcation functions $M_1$ and $N_1$ of -1 and 1, respectively. This relation allow us to study the simultaneous zeros of these functions.

\begin{proposition}\label{N1}
	Let $M_1(r)= M_1(r; a^\pm_{k,l})$ and $N_1(r)=N_1(r; a^\pm_{k,l})$ be the bifurcation functions of PWCS \eqref{eq_pert} with $f(z)={\rm i}(z^2-1)/2$ associated to $z=-1$ and $z=1,$ respectively. Then
	\[
	N_1(r)=N_1(r; a^\pm_{k,l})= M_1(r; (-1)^la^\mp_{k,l}),
	\]    
	that is, the expression of $N_1(r)$ coincides with the expression of $M_1(r)$ given in Proposition~\ref{M1} and Remark~\ref{re:M1} where each $a_{k,l}^\pm$ is changed  by $(-1)^{l}a_{k,l}^\mp,$ for all $0\leq k\leq l$ and $0\leq l\leq m.$
\end{proposition}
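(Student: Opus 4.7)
The plan is to exploit the involution $z \mapsto -z$, which preserves $f(z) = {\rm i}(z^2-1)/2$ and swaps the two centers $z = \pm 1$. Combining this reflection with a time reversal $\tau = -t$ converts the piecewise system \eqref{eq_pert} into another system of exactly the same form, sending the center at $z=1$ to a center at $w=-1$. Since Proposition~\ref{M1} already gives an explicit formula for the bifurcation function at $w=-1$ of any such system, the formula for $N_1$ will follow by substitution.

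First I would set $w=-z$ and $\tau=-t$. Using $f(-w)=f(w)$ and the fact that $\operatorname{Im}(z)>0$ if and only if $\operatorname{Im}(w)<0$, a direct computation gives
\begin{equation*}
\frac{dw}{d\tau}=f(w)+\epsilon\,\tilde R^\pm(w,\overline w), \qquad \pm\operatorname{Im}(w)>0,
\end{equation*}
with $\tilde R^\pm(w,\overline w)=R^\mp(-w,-\overline w)$. Expanding the monomials appearing in \eqref{tom} via $(-w)^{l-k}(-\overline w)^k=(-1)^l w^{l-k}\overline w^k$ immediately shows that the transformed perturbation is of the type \eqref{tom} with new coefficients $\tilde a_{k,l}^\pm=(-1)^l a_{k,l}^\mp$.

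Next I would identify $N_1(r;a_{k,l}^\pm)$ with the Melnikov function at $w=-1$ of this transformed system. The linearizing changes are compatible: the map $\psi(z)=(z-1)/(z+1)$ at $z=1$ and $\phi(w)=(1+w)/(1-w)$ at $w=-1$ are related by $\psi(z)=-\phi(-z)$, hence $|\psi(z)|=|\phi(w)|$ when $w=-z$, so limit cycles in both pictures are parametrized by the same radius $r$. Applying Proposition~\ref{M1} to the transformed system with coefficients~$\tilde a_{k,l}^\pm$ then yields $N_1(r)=M_1(r;\tilde a_{k,l}^\pm)=M_1(r;(-1)^l a_{k,l}^\mp)$, which is the claim.

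The delicate point I expect to be the main obstacle is the orientation and sign bookkeeping in this last identification. Both the spatial reflection $z\mapsto -z$ and the time reversal individually reverse the orientation of orbits, so their composition preserves it; concretely, a counterclockwise orbit in $\psi$-coordinates at $z=1$ (since $f'(1)={\rm i}$) maps to a clockwise orbit in $\phi$-coordinates at $w=-1$ (matching $f'(-1)=-{\rm i}$ in the transformed flow), which is exactly the convention under which Theorem~\ref{tma} and Proposition~\ref{M1} are stated. Tracking how the regions $\Sigma^\pm$ are exchanged under $w=-z$ (which is what produces the $\pm \leftrightarrow \mp$ swap in $\tilde R^\pm$) and verifying that the two Melnikov integrals agree as functions, not just up to sign, finishes the proof.
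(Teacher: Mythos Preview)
Your proposal is correct and follows essentially the same approach as the paper: the change $w(t)=-z(-t)$, which swaps the centers and the half-planes while preserving $f$, reducing $N_1$ to an instance of $M_1$ with coefficients $(-1)^l a_{k,l}^\mp$. Your discussion of the linearizing maps and the orientation bookkeeping is more careful than the paper's, which simply asserts the identification without spelling out these details.
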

\begin{proof}
	Using the change of variables and time $w(t)=-z(-t), $ we transform PWCS \eqref{eq_pert} with $f(z)={\rm i}(z^2-1)/2$ into 
	\begin{equation}\label{eq_pos}
		\dot{w}={\rm i}(w^2-1)/2+\left\{\begin{array}{l}
			\epsilon R_m^-(-w,-\overline{w}), \text{ when } \operatorname{Im}(w)>0,\\[5pt]
			\epsilon R_m^+(-w,-\overline{w}),\text{ when } \operatorname{Im}(w)<0.
		\end{array} \right.
	\end{equation}
	Hence, the zeros of the bifurcation function $N_1(r)$ of PWCS \eqref{eq_pert} are the zeros of the bifurcation function associated to $z=-1$ of \eqref{eq_pos}. Then 
	\begin{equation}
		R_m^\mp(-w,-\overline{w})=\sum_{l=0}^m\sum_{k=0}^l\overline{a}^\mp_{k,l}(-w)^{l-k}(-\overline{w})^k=\sum_{l=0}^m\sum_{k=0}^l(-1)^l \overline{a}^\mp_{k,l} w^{l-k}\overline{w}^k,
	\end{equation}
	as we wanted to prove.
\end{proof}

From the above proposition we obtain the expression of the function $N_1$ in Proposition~\ref{prop_aux_complex} and also next remark.

\begin{remark}\label{re:N1} Values of the parameters appearing in the expression of $N_1$ given in Proposition~\ref{prop_aux_complex} when $m\le3.$
	\begin{align*}
		\kappa&= 2\pi\big(2\operatorname{Re}(a_{2,2}^-) +2 \operatorname{Re}(a_{2,2}^+) -\operatorname{Re}(a_{1,2}^-) - \operatorname{Re}(a_{1,2}^+) \big),\\
		\rho&= 4\pi\big(\operatorname{Re}(a_{0,3}^+)+\operatorname{Re}(a_{0,3}^-)+\operatorname{Re}(a_{1,3}^+)+\operatorname{Re}(a_{1,3}^-)-\operatorname{Re}(a_{2,3}^+)-\operatorname{Re}(a_{2,3}^-)\\&\quad\quad\quad+\operatorname{Re}(a_{3,3}^+)+\operatorname{Re}(a_{3,3}^-)\big).
	\end{align*}
\end{remark}

\section{Acknowledgements}
This article was possible thanks to the scholarship granted from the Brazilian Federal Agency for Support and Evaluation of Graduate Education (CAPES), in the scope of the Program CAPES-Print, process number 88887.310463/2018-00, International Cooperation Project number 88881.310741/2018-01.

Armengol Gasull is partially supported supported by the Ministry of Science and Innovation--State Research Agency of the Spanish Government through grants PID2022-136613NB-I00 and  by the grant 2021-SGR-00113 from AGAUR of Generalitat de Catalunya.

Gabriel Alexis Rondón Vielma is supported by São Paulo Research Foundation (FAPESP) grants 2020/06708-9 and 2022/12123-9. 

 Paulo Ricardo da Silva is also partially supported by São Paulo Research Foundation (FAPESP) grant 2019/10269-3 and 2023/02959-5, CNPq grant 302154/2022-1 and ANR-23-CE40-0028. 
%====================================================
%====================================================
%====================================================
%\newpage
%\appendix

%====================================================
%====================================================
%====================================================

%\begin{thebibliography}{99}

%\bibitem{SilvaSarmientoNovaes} P.R. da Silva, I.S. Meza-Sarmiento, D.D. Novaes. \emph{Nonlinear Sliding of Discontinuous Vector Fields and Singular Perturbation}. \textbf{Differ Equ Dyn Syst} (2018), DOI https://doi.org/10.1007/s12591-018-0439-1.

%\bibitem{SilvaMoraes} P.R. da Silva, J.R. de Moraes. \emph{Piecewise-Smooth Slow–Fast Systems}. \textbf{J. Dyn. Control Syst}. 27 (2021), pp 67--85.

%\bibitem{SotoTeixeira} J. Sotomayor, M.A. Teixeira. \emph{Regularization of discontinuous vector fields}. International Conference on Differential Equations, Lisboa, 1996, pp 207--223.

%\bibitem{Szmolyan} P. Szmolyan. \emph{Transversal heteroclinic and homoclinic orbits in singular perturbation problems}. \textbf{J. Diff. Eq.} 92 (1991), pp 252--281.

%\bibitem{Wiggins} S. Wiggins. \emph{Normally Hyperbolic Invariant Manifolds in Dynamical Systems}. Springer, 1994.	

%\end{thebibliography}

\bibliographystyle{abbrv}
\bibliography{references1}

\end{document}